\documentclass[a4paper,12pt,leqno]{article}
\usepackage{latexsym}
\usepackage[all]{xy}

\usepackage{amssymb} 
\usepackage{amsmath} 

\usepackage{tikz}
\usetikzlibrary{arrows,cd}  
\usepackage{amsthm}
\usepackage{comment}

\def\P{{\mathbf{P}}}
\def\Z{{\mathbb{Z}}}
\def\K{{\mathbb{K}}}
\def\CC{{\mathbb{C}}}
\def\A{{\mathcal{A}}}
\def\B{{\mathcal{B}}}

\def\O{{\mathcal{O}}}

\DeclareMathOperator{\codim}{codim}

\DeclareMathOperator{\Der}{Der}

\numberwithin{equation}{section}

\newcommand{\owari}{\hfill$\square$}

\newtheorem{theorem}{Theorem}[section]
\newtheorem{prop}[theorem]{Proposition}
\newtheorem{cor}[theorem]{Corollary}
\newtheorem{lemma}[theorem]{Lemma}
\newtheorem{define}[theorem]{Definition}

\theoremstyle{remark}
\newtheorem{rem}[theorem]{Remark}

\title{Equivalence of freeness and higher order freeness of hyperplane arrangements}
\author{Takuro Abe}
\date{\today}

\begin{document}

\maketitle

\begin{abstract}
Using vector bundle theory, we prove that an essential hyperplane
arrangement in an $\ell$-dimensional vector space over a field of
characteristic zero is free if and only if its module of logarithmic
derivations of order $p$ is free for some $1\le p\le\ell-1$.
The same statement holds for multiarrangements.\end{abstract}

\section{Introduction}
Let $\K$ be a field of characteristic zero, $V=\K^\ell$ and 
$S=\K[x_1,\ldots,x_\ell] $ be the coordinate ring of $V$. Let $\Der S=
\oplus_{i=1}^\ell S\partial_{x_i}$ be the $S$-graded module of $\K$-linear derivations of $S$. We say that $\A$ is an \textbf{arrangement of hyperplanes} in $V$ if $\A$ is a finite set of linear hyperplanes in $V$. We say that $\A$ is \textbf{essential} if $\bigcap_{H \in \A} H=\{0\}$. Unless otherwise specified, we assume that an arrangement is essential. For $H \in \A$ fix a linear form $\alpha_H \in V^*$ such that $\ker \alpha_H=H$. Now let us define the key modules in this article.

\begin{define}
Let $D^0(\A):=S$ and for $1 \le p \le \ell$, define 
$$
D^p(\A):=\{\theta \in \wedge^p \Der S \mid 
\theta(\alpha_H,f_2,\ldots,f_p) \in S \alpha_H \ (\forall H \in \A,\ 
\forall f_i \in S)\},
$$
which is called the \textbf{logarithmic derivation module of order $p$}, and 
when $p=1$, it is just called the 
\textbf{logarithmic derivation module} of $\A$.
\label{logder}
\end{define}

Note that $D^p(\A)$ is an $S$-graded module of rank $\begin{pmatrix}
\ell \\ p
\end{pmatrix}$. The case when $p=1$ is intensively studied. In particular, the free arrangement in the following is at the center of the study of them.

\begin{define}
We say that $\A$ is free with $\exp(\A)=(d_1,\ldots,d_\ell)$ if 
$$
D(\A) \simeq \oplus_{i=1}^\ell S[-d_i].
$$
\label{freedef}
\end{define}

One of the important consequences of freeness is Terao's factorization theorem in \cite{T2}, which states that the Poincar\'{e} polynomial of a free arrangement factors according to its exponents.

Recently, the study of $D^p(\A)$ has developed through connections with Solomon–Terao algebras and polynomials \cite{ST, AMMN, CCZZ},  
Hessenberg varieties \cite{AHMMS}, superspace coinvariants and the Fields conjecture \cite{ACKMR, MRW}, and the rational hit conjecture  \cite{Do}. These developments motivate the notion of $p$-freeness.


\begin{define}
Let $0 \le p \le \ell$.
We say that $\A$ is \textbf{$p$-free} if $D^p(\A)$ is free.
\label{pfreedef}
\end{define}

It is trivial by definition that every arrangement is $0$-free and $\ell$-free. The $1$-freeness is nothing but the usual freeness, and the $(\ell-1)$-freeness is the same as 
the usual freeness too due to a canonical identification (Lemma \ref{iden})
$$
D^{\ell-1}(\A)[|\A|] \simeq D(\A)^*.$$
Here for an $S$-graded module $N$, let $N^*:=\mbox{Hom}_S (N,S)$.
Moreover, in \cite{ST}, the following was proved.

\begin{theorem}[Proposition 3.4, \cite{ST}]
If $\A$ is free, then $\wedge^p D(\A) =D^p(\A)$. 
So the freeness implies the $p$-freeness for all $p$.
\label{ST}
\end{theorem}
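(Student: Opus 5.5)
We show the two inclusions $\wedge^p D(\A)\subseteq D^p(\A)$ and $D^p(\A)\subseteq \wedge^p D(\A)$, using a basis of $D(\A)$ and Saito's criterion. Freeness of $D^p(\A)$ then follows at once, since $\wedge^p$ of a free module with basis $\theta_1,\ldots,\theta_\ell$ is free with basis the wedges $\theta_{i_1}\wedge\cdots\wedge\theta_{i_p}$, $i_1<\cdots<i_p$.

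\emph{First inclusion.} This holds for every arrangement. Take $\theta_1,\ldots,\theta_p\in D(\A)$ and view $\theta_1\wedge\cdots\wedge\theta_p$ as an alternating $p$-form on $S$. For $H\in\A$,
$$
(\theta_1\wedge\cdots\wedge\theta_p)(\alpha_H,f_2,\ldots,f_p)=\det\bigl(\theta_i(g_j)\bigr)_{i,j},
$$
where $g_1=\alpha_H$ and $g_j=f_j$ for $j\ge 2$. Its first column consists of the entries $\theta_i(\alpha_H)\in S\alpha_H$. Expanding along that column shows the determinant lies in $S\alpha_H$. Hence $\wedge^p D(\A)\subseteq D^p(\A)$.

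\emph{Reverse inclusion.} Assume $\A$ is free with basis $\theta_1,\ldots,\theta_\ell$ of $D(\A)$. By Saito's criterion, $\det(\theta_i(x_j))=cQ$ with $c\in\K^\times$ and $Q=\prod_{H\in\A}\alpha_H$. Let $M=(\theta_i(x_j))$, so that $\theta_i=\sum_j M_{ij}\partial_{x_j}$. Then $\wedge^p M$ relates the two bases $\{\theta_I\}$ and $\{\partial_J\}$ of $\wedge^p\Der S\otimes \mathrm{Frac}(S)$. Given $\eta\in D^p(\A)$, write $\eta=\sum_I c_I\theta_I$ with $c_I\in\mathrm{Frac}(S)$. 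Using the adjugate, $\wedge^p M$ has inverse with entries in $S[Q^{-1}]$, so $Qc_I\in S$ for all $I$. It remains to show that no $\alpha_H$ divides the denominators.

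\emph{Local step (main obstacle).} Fix $H$ and localize at the prime $(\alpha_H)$. Change coordinates so that $\alpha_H=x_1$. Localized at $(x_1)$, $D(\A)_{(x_1)}$ has basis $x_1\partial_{x_1},\partial_{x_2},\ldots,\partial_{x_\ell}$, since the other hyperplanes become units. Likewise $D^p(\A)_{(x_1)}$ consists of the $p$-vectors whose $\partial_{x_1}\wedge\partial_J$ coefficients are divisible by $x_1$; this is checked directly from the definition by plugging in $\alpha_H=x_1$. That localized module is exactly $\wedge^p$ of the localized $D(\A)$. Since $\theta_1,\ldots,\theta_\ell$ is also a basis after localization, $c_I\in S_{(\alpha_H)}$ for every $H$. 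Combined with $Qc_I\in S$ and the fact that $S$ is a UFD, this gives $c_I\in S$. The care needed is in this localization computation for $D^p$, namely checking the definition's condition only at $H$ once the other $\alpha_{H'}$ are invertible.

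Hence $D^p(\A)=\wedge^pD(\A)$, and $D^p(\A)$ is free.
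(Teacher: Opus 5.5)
The paper does not prove this statement. It quotes it from \cite{ST}, and quotes the multiarrangement version it actually uses (Proposition \ref{STmulti}) from \cite{ATW}, so there is no in-paper proof to compare against. Your proof is correct and self-contained.

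Your reverse inclusion is a codimension-one argument: $c_I\in S[Q^{-1}]$ globally, $c_I\in S_{(\alpha_H)}$ after localizing, and unique factorization finishes. The local step is sound. In Definition \ref{logder} one may take $f_2,\ldots,f_p$ among the coordinates, so $D^p(\A)$ is the kernel of an $S$-linear map $\wedge^p\Der S\to\bigoplus_H (S/\alpha_HS)^N$. Localizing at $(\alpha_H)$ kills every summand except the one for $H$; this is the case $X=H$ of Proposition \ref{localfunctor}.

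Two small remarks. First, the adjugate only gives $Q^k c_I\in S$ for some $k$ (e.g.\ $k=p$). Getting $Qc_I\in S$ needs Jacobi's complementary-minor identity, but this is harmless since you only use $c_I\in S[Q^{-1}]$. Second, you can drop Saito's criterion altogether. At a height-one prime $\mathfrak p$ containing no $\alpha_H$, both localized modules are the full $\Der S_{\mathfrak p}$ and $\wedge^p\Der S_{\mathfrak p}$, so again $c_I\in S_{\mathfrak p}$, and $S=\bigcap_{\mathrm{ht}\,\mathfrak p=1}S_{\mathfrak p}$.

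A shorter, purely global alternative is to wedge with complementary basis elements. For $\eta\in D^p(\A)$ one has $\eta\wedge\theta_{I^c}=\pm c_I\,\theta_1\wedge\cdots\wedge\theta_\ell=\pm c\,c_IQ\,\partial_{x_1}\wedge\cdots\wedge\partial_{x_\ell}$. On the other hand, $\eta\wedge\theta_{I^c}\in D^\ell(\A)=QS\,\partial_{x_1}\wedge\cdots\wedge\partial_{x_\ell}$, because each shuffle term of $(\eta\wedge\theta_{I^c})(\alpha_H,\ldots)$ puts $\alpha_H$ into a logarithmic factor. Hence $c_I\in S$ at once.

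That route needs Saito's criterion but no localization. Yours needs only the existence of a basis plus a reflexivity-type local-to-global step. Both extend verbatim to multiarrangements: replace $x_1\partial_{x_1}$ by $x_1^{m(H)}\partial_{x_1}$ and $Q$ by $\prod_H\alpha_H^{m(H)}$. That multiarrangement form is the one the paper relies on.
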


So it is natural to ask the converse, i.e., whether $\A$ is free if it is $p$-free for some $p$ with $2 \le p \le \ell-2$. This is motivated by Remark 3.10, \cite{DSSWW}. We prove that this is true in general.

\begin{theorem}
Let $\ell \ge 2$ and $\A$ be essential. Then $\A$ is free if and only if there is $p,\ 
1 \le p \le \ell-1$ such that $\A$ is $p$-free.
\label{main}
\end{theorem}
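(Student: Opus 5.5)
The plan is to pass to vector bundles on $\P^{\ell-1}$ and argue by induction on $\ell$. One direction is Theorem \ref{ST}. For the converse, the cases $p=1$ and $p=\ell-1$ are immediate: the first is the definition, and the second follows from the identification $D^{\ell-1}(\A)[|\A|]\simeq D(\A)^*$ of Lemma \ref{iden} together with reflexivity of $D(\A)$. So assume $2\le p\le \ell-2$, hence $\ell\ge 4$.

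\emph{Step 1 (local freeness).} For a flat $X\ne 0$, I localize at the generic point of $X$ and replace $\A$ by the localization $\A_X$. This arrangement has rank $r<\ell$ and splits as $\B\times\emptyset_{\ell-r}$ with $\B$ essential of rank $r$. I expect $D^p$ to be compatible with this product in the form
$$D^p(\A_X)\simeq\bigoplus_j D^j(\B)\otimes \wedge^{p-j}\K^{\ell-r}\ (\otimes S).$$
Freeness localizes, and a graded direct summand of a graded free module is free, so $D^j(\B)$ is free for every $j$ that occurs. The index $j$ ranges over $[\max(0,p-\ell+r),\min(p,r)]$, and since $1\le p\le \ell-1$ this interval meets $[1,r-1]$ whenever $r\ge 2$. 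Induction on $\ell$ then gives that $\B$ is free, hence $\A_X$ is free. The case $r=1$ is trivially free. Consequently the sheaf $E:=\widetilde{D(\A)}$ on $\P^{\ell-1}$ is locally free. Where $E$ is locally free, Theorem \ref{ST} (localized) gives $\widetilde{D^p(\A)}=\wedge^pE$. By hypothesis this bundle splits as a sum of line bundles.

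\emph{Step 2 (reduction to a bundle statement).} Since $D(\A)$ is reflexive with $D(\A)=\Gamma_*(E)$, freeness of $\A$ is equivalent to $E$ splitting as a sum of line bundles. By Horrocks' criterion, this is in turn equivalent to $H^i_*(E)=0$ for $0<i<\ell-2$. Split off the Euler derivation, so $E=\O(-1)\oplus E_0$; the splitting of $\wedge^pE$ then passes to $\wedge^pE_0$ and $\wedge^{p-1}E_0$ as summands. It therefore suffices to prove the following: if $F$ is a vector bundle of rank $r$ on $\P^n$ and $\wedge^pF$ splits for some $1\le p\le r-1$, then $F$ splits.

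\emph{Step 3 (the bundle lemma, main obstacle).} This is where characteristic zero enters. Contraction $\wedge^{p-1}F^*\otimes\wedge^pF\to F$ and the comultiplication map $F\to \wedge^{p-1}F^*\otimes\wedge^pF$ compose to $\binom{r-1}{p-1}\cdot\mathrm{id}$. Similarly, $\mathrm{End}(F)$ is a direct summand of $\mathrm{End}(\wedge^pF)$, because $\mathfrak{gl}(F)\to\mathfrak{gl}(\wedge^pF)$ is an injective map of representations and reductivity provides the splitting. As $\mathrm{End}(\wedge^pF)$ is split, all intermediate cohomology $H^i_*(\mathrm{End}\,F)$, $0<i<n$, vanishes. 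I would first try to deduce splitting of $F$ from this. Restrict to a general plane $\P^2$, which is enough by Horrocks' theorem that a bundle splits if its restriction to a plane does. Then compare splitting types on lines: the splitting type of $\wedge^pF$ on lines is constant, and it determines that of $F$ as a multiset. Finally, use $H^1_*(\mathrm{End}\,F)=0$ to lift a splitting of $F|_L$ through the infinitesimal neighbourhoods of a line, in Horrocks--Barth style, to a global splitting.

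If that lifting argument stalls, the fallback is an induction on $p$. Using $\wedge^{r-p}F\simeq\wedge^pF^*\otimes\det F$, one may assume $p\le r/2$. Then try to show that $\wedge^{p-1}F$ has vanishing intermediate cohomology by taking the summand $\wedge^{p-1}F\otimes F\supset\wedge^pF$ in the opposite direction and using a Koszul-type filtration. I expect Step 3 to be the real difficulty; Steps 1 and 2 should be routine.
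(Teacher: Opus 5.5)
Your Step 1 and the reduction at the start of Step 2 match the paper: local freeness by induction via Lemma \ref{decomp}, then $\widetilde{D^p(\A)}\simeq\wedge^pE$ split, then reflexivity.

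\textbf{The gap is Step 3.} You reduce to a bundle lemma in arbitrary rank, with only $\wedge^pF$ assumed split, and leave it unproved. Neither mechanism you sketch works as stated.
\begin{itemize}
\item The splitting type of $\wedge^pF$ on lines does not determine that of $F$. For $r=4$, $p=2$ (which occurs when $\ell=5$), the types $(0,2,4,8)$ and $(-1,3,5,7)$ have the same pairwise sums $2,4,6,8,10,12$, and even the same $\sum a_i$ and $\sum a_i^2$. So constancy of $a_{\wedge^pF}(L)$ alone gives no uniformity.
\item The ``Horrocks--Barth'' lifting through infinitesimal neighbourhoods is only a hope. You would need to identify the obstruction groups and show that $H^1_*(\mathcal{E}nd F)=0$ kills them.
\end{itemize}
The paper fills this step differently. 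From the splitting of $\wedge^pE$ it extracts only that $\sum a_i=c_1(E)$ and $\sum a_i^2$ are independent of $L$. Hence $\sum_{|d|\le M}h^0(E|_L(d))$ is constant, and upper semicontinuity forces every splitting type to equal the generic one. So $E$ is uniform of rank $n+1$ on $\P^n$ with $n=\ell-1\ge 3$, and Ballico's classification (Theorem \ref{Ballico}) finishes.

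Your own ingredients close the gap, and more cheaply, in either of two ways.

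\emph{First route (uses the Euler derivation).} You threw away information: $\wedge^{p-1}E_0$ is also split, being a summand of $\wedge^pE$. Your contraction identity, with $\binom{\ell-2}{p-1}\neq 0$, makes $E_0$ a direct summand of $\wedge^{p-1}E_0^*\otimes\wedge^pE_0$, which is a sum of line bundles. A summand of a split bundle on $\P^{\ell-1}$ splits (Horrocks' criterion or Krull--Schmidt), so $E$ splits. This works for every $1\le p\le\ell-1$, with no uniformity argument and no classification. Because it uses the Euler derivation, it does not cover multiarrangements (Theorem \ref{main2}).

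\emph{Second route (your $\mathcal{E}nd$ idea, without Euler).} Lift idempotents instead of working on infinitesimal neighbourhoods.
\begin{itemize}
\item On a plane $P\supset L$, $\mathcal{E}nd(F|_P)$ is a summand of the split bundle $\mathcal{E}nd(\wedge^pF)|_P$. Hence $H^1(P,\mathcal{E}nd(F|_P)(-1))=0$, and restriction $\mathrm{End}(F|_P)\to\mathrm{End}(F|_L)$ is a surjective ring map.
\item Its kernel consists of endomorphisms divisible by the equation $h$ of $L$. A product of $k$ of them lies in $h^kH^0(\mathcal{E}nd(F|_P)(-k))$, which is $0$ for $k\gg 0$, so the kernel is nilpotent.
\item Therefore the orthogonal idempotents of a splitting $F|_L\simeq\bigoplus\O_L(a_i)$ lift to orthogonal idempotents summing to $1$ in $\mathrm{End}(F|_P)$. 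Their images are line bundles, so $F|_P$ splits, and by Horrocks so does $F$.
\end{itemize}

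As written, however, Step 3 is a plan rather than a proof.
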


In fact, we prove Theorem \ref{main} in the more general setting of multiarrangements. The proof uses Ballico’s classification of uniform vector bundles of rank $n+1$ on $\P^n$ in \cite{B}.

\medskip

\noindent
\textbf{Acknowledgements}.
The author
is partially supported by JSPS KAKENHI Grant Numbers JP23K17298 and JP25K24692.
\medskip

\noindent
\textbf{AI declaration}.
ChatGPT (OpenAI) was used during the preparation of this article for assistance with mathematical exploration, examples, and language editing.
\medskip

\section{Preliminaries}

We recall the definition of multiarrangements, for which we prove a generalization of Theorem \ref{main}. For background on arrangements and vector bundles, we refer to \cite{OT} and \cite{OSS}, respectively.

\begin{define}
A pair $(\A,m)$ is a \textbf{multiarrangement} if $\A$ is an arrangement, and $m:\A \rightarrow \Z_{>0}$. $m$ is called a \textbf{multiplicity}. Let 
$|m|:=\sum_{H \in \A} m(H)$.
\label{multidef}
\end{define}

We introduce a fundamental combinatorial invariant of $\A$ and an important operation of $(\A,m)$ called a localization.

\begin{define}
For a multiarrangement $(\A,m)$, define 
$$
L(\A):=\{\cap_{H \in \B} H\mid \B \subset \A\},
$$
which is called the \textbf{intersection lattice} of $\A$.
For $X \in L(\A)$
the \textbf{localization} $(\A_X,m_X)$ of $(\A,m)$ at $X$ is defined by 
$$\A_X:=\{
H \in \A \mid H \supset X\}
$$
and by $m_X:=m|_{\A_X}$.
\label{lattice}
\end{define}

For $(\A,m)$, we can also define the logarithmic derivation modules.

\begin{define}
Let $D^0(\A,m):=S$ and for $1 \le p \le \ell$, define 
$$
D^p(\A,m):=\{\theta \in \wedge^p \Der S \mid 
\theta(\alpha_H,f_2,\ldots,f_p) \in S \alpha_H^{m(H)} \ (\forall H \in \A,\ 
\forall f_i \in S)\},
$$
which is called the \textbf{logarithmic derivation module of order $p$} of $(\A,m)$, and 
when $p=1$, it is just called the 
\textbf{logarithmic derivation module} of $(\A,m)$.
\label{logdermulti}
\end{define}

Note that $D^p(\A,m)$ is an $S$-graded module of rank $\begin{pmatrix}
\ell \\ p
\end{pmatrix}$. 
Also, note that $D(\A)=D(\A,1)$ for a constant multiplicity $1$. So the statement for $D(\A,m)$ implies the same statement for $D(\A)$.
We say that $(\A,m)$ is 
\textbf{$p$-free} for $0 \le p \le \ell$ if $D^p(\A,m)$ is free.
Since $D(\A,m)$ is reflexive due to \cite{Z}, the following is immediate.

\begin{prop}
$(\A,m)$ is free if $\ell \le 2$.
\label{2free}
\end{prop}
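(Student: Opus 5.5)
The argument rests on two facts: $D(\A,m)$ is reflexive by \cite{Z}, and over a polynomial ring in at most two variables every finitely generated graded reflexive module is free. The case $\ell=1$ is trivial. Here $S=\K[x_1]$ and there is a single hyperplane $H=\{x_1=0\}$, since $\A$ is essential and all hyperplanes are linear. The condition $\theta(x_1)\in S x_1^{m(H)}$ then gives $D(\A,m)=S\,x_1^{m(H)}\partial_{x_1}$, which is free of rank one.

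For $\ell=2$ we first check reflexivity directly. We have $D(\A,m)=\bigcap_{H\in\A}D(\{H\},m(H))$ inside $\Der S$. Each $D(\{H\},m(H))$ is free: in coordinates with $\alpha_H=x_1$ it equals $S\,x_1^{m(H)}\partial_{x_1}\oplus S\partial_{x_2}$. An intersection of free, hence reflexive, submodules of the same rank inside a free module is again reflexive. More elementarily, if $\theta\in\Der S\otimes_S S_{\mathfrak p}$ lies in $D(\A,m)_{\mathfrak p}$ for every height-one prime $\mathfrak p$, then $\theta\in\Der S$ and $\theta(\alpha_H)\in\alpha_H^{m(H)}S_{\mathfrak p}$ for all such $\mathfrak p$. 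Since $S$ is a UFD, this gives $\alpha_H^{m(H)}\mid\theta(\alpha_H)$ in $S$. Hence $D(\A,m)=\bigcap_{\operatorname{ht}\mathfrak p=1}D(\A,m)_{\mathfrak p}$, and this is reflexivity.

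Next, $S=\K[x_1,x_2]$ is regular of global dimension $2$, and a reflexive module over a normal domain satisfies Serre's condition $S_2$. Therefore its depth at the graded maximal ideal $\mathfrak m$ is $2=\dim S$. The graded Auslander--Buchsbaum formula then gives $\operatorname{pd}_S D(\A,m)=2-\operatorname{depth}D(\A,m)=0$. So $D(\A,m)$ is graded projective, hence graded free, of rank $2$.

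The only point needing care is the depth estimate, so we spell it out. Embed $M=D(\A,m)$ in the free module $F=\Der S$; the cokernel $F/M$ is torsion-free because $M$ is reflexive, so it embeds in $F/M\otimes_S\mathrm{Frac}(S)$. Torsion-freeness of $F/M$ gives $\operatorname{pd}F/M\le1$ over a two-dimensional regular ring, hence $\operatorname{pd}M=0$. Equivalently, a torsion-free module over a two-dimensional regular ring has depth at least $1$. This yields the freeness of $(\A,m)$ for $\ell\le2$.
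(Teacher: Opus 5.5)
Your route is the paper's: $D(\A,m)$ is reflexive, and a finitely generated graded reflexive module over $\K[x_1,x_2]$ is free. You verify reflexivity directly via $D(\A,m)=\bigcap_{\operatorname{ht}\mathfrak p=1}D(\A,m)_{\mathfrak p}$ instead of citing \cite{Z}, and that is fine. Your third paragraph already proves freeness correctly: $S_2$ gives depth $2$ at $\mathfrak m$, and Auslander--Buchsbaum gives projective dimension $0$.

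However, your final paragraph, which you present as the careful justification of the depth bound, is false as written. The cokernel of $D(\A,m)\subset\Der S$ is not torsion-free. Put $Q=\prod_{H\in\A}\alpha_H^{m(H)}$. Then $Q\,\Der S\subset D(\A,m)$, so $\Der S/D(\A,m)$ is annihilated by $Q$. It is therefore a torsion module, and it is nonzero whenever $\A\neq\emptyset$. Indeed, both modules have rank $\ell$, so a torsion-free cokernel would have rank $0$ and hence vanish, forcing $D(\A,m)=\Der S$.

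Reflexivity does not make the cokernel of an arbitrary embedding into a free module torsion-free; it only guarantees that some embedding has this property. To repair the paragraph, take a graded free presentation $F_1\to F_0\to M^*\to 0$ and dualize it to get $0\to M=M^{**}\to F_0^*\to F_1^*$. Now $F_0^*/M$ embeds in the free module $F_1^*$, so it is torsion-free. Since $S$ has global dimension $2$, $M$ is a second syzygy and hence projective, i.e.\ graded free. Alternatively, simply delete the last paragraph and rely on the $S_2$ argument, which is already complete.
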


Also we will use the following.

\begin{prop}[ Corollary A1.13, \cite{Eis} ]
If $M$ is a finitely generated graded reflexive $S$-module, then 
$$
\bigoplus_{d \in \Z} H^0(\widetilde{M}(d))=M.
$$
\label{Eis}
\end{prop}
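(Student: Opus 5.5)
The statement is Proposition~\ref{Eis}: for a finitely generated graded reflexive $S$-module $M$, the natural map $M \to \bigoplus_{d} H^0(\widetilde{M}(d))$ is an isomorphism. The plan is to realize $\bigoplus_d H^0(\widetilde{M}(d))$ as the module of sections of $M$ over the punctured affine space $U:=\mathrm{Spec}\, S\setminus\{\mathfrak m\}$, where $\mathfrak m=(x_1,\ldots,x_\ell)$. We then use that reflexive modules have depth at least $2$ at $\mathfrak m$, so that sections extend across a point of codimension $\ell\ge 2$.

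\textbf{Step 1: identify the graded module of twisted sections.} Let $\pi:U\to \P^{\ell-1}$ be the projection, an affine $\mathbb{G}_m$-bundle. For a graded module $M$ we have
$$\pi_*(\widetilde{M}|_U)\simeq\bigoplus_{d\in\Z}\widetilde{M}(d).$$
Taking global sections gives $\bigoplus_d H^0(\widetilde{M}(d))=\Gamma(U,\widetilde{M}|_U)$, compatibly with the natural map from $M$. This can be checked on the cover $D(x_i)$: both sides equal $M_{x_i}$, using $\Gamma(D_+(x_i),\widetilde{M}(d))=(M_{x_i})_d$.

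\textbf{Step 2: the local cohomology sequence.} There is an exact sequence
$$0\to H^0_{\mathfrak m}(M)\to M\to \Gamma(U,\widetilde{M})\to H^1_{\mathfrak m}(M)\to 0.$$
So the claim is equivalent to $H^0_{\mathfrak m}(M)=H^1_{\mathfrak m}(M)=0$, that is, $\operatorname{depth}_{\mathfrak m}M\ge 2$ (for $M\ne 0$).

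\textbf{Step 3: depth of reflexive modules.} This is the main point. Write $M\simeq N^*$ with $N=M^*$ finitely generated. Choose a graded presentation $F_1\to F_0\to N\to 0$. Dualizing gives an exact sequence
$$0\to M\to F_0^*\to F_1^*,$$
so $M$ is a second syzygy, namely the kernel of a map between free modules. When $\ell\ge 2$, free modules have depth $\ell\ge 2$. Let $C$ be the image of $F_0^*\to F_1^*$. Then $C\subset F_1^*$ is torsion-free, so $H^0_{\mathfrak m}(C)=0$. Applying the long exact sequence of local cohomology to $0\to M\to F_0^*\to C\to0$ gives
$$H^0_{\mathfrak m}(M)\subset H^0_{\mathfrak m}(F_0^*)=0,$$
and $H^1_{\mathfrak m}(M)$ sits between $H^0_{\mathfrak m}(C)=0$ and $H^1_{\mathfrak m}(F_0^*)=0$, so it vanishes. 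Combined with Steps 1 and 2, this proves the proposition.

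When $\ell=1$ the punctured spectrum has codimension $1$ and the argument needs care. There $S$ is a PID, so reflexive means free, and the statement can be checked directly on $M=S(a)$. The only remaining obstacle is to state the compatibility in Step 1 precisely; I would cite the standard comparison between graded modules and sheaves on $\P^{\ell-1}$.
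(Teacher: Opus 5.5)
For $\ell\ge 2$ your argument is correct. The paper gives no proof of its own and only cites Eisenbud's Corollary A1.13, and your route is the standard proof behind that citation:
\begin{itemize}
\item identify $\bigoplus_d H^0(\widetilde M(d))$ with the sections of $M$ over the punctured cone;
\item use the sequence $0\to H^0_{\mathfrak m}(M)\to M\to \bigoplus_d H^0(\widetilde M(d))\to H^1_{\mathfrak m}(M)\to 0$;
\item observe that a reflexive module is a second syzygy, so $H^0_{\mathfrak m}(M)=H^1_{\mathfrak m}(M)=0$.
\end{itemize}

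Your closing paragraph on $\ell=1$ is wrong, however. Take $S=\K[x]$. Then $\mathrm{Proj}\,S$ is the single point $D_+(x)$, and $H^0(\widetilde{S(a)}(d))=(S_x)_{a+d}=\K x^{a+d}$ for every $d\in\Z$. Hence
$$\bigoplus_d H^0(\widetilde{S(a)}(d))=\K[x,x^{-1}](a)\supsetneq S(a).$$
This is exactly what your own Step 2 predicts, since $H^1_{\mathfrak m}(S)\cong \K[x,x^{-1}]/\K[x]\neq 0$.

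So the proposition is false for $\ell=1$, and the direct check you propose cannot succeed. The right fix is to add the hypothesis $\ell\ge 2$. This hypothesis is implicit in the paper and costs nothing there, because the proposition is only applied in the inductive step of Theorem~\ref{main2}, where $\ell\ge 4$.
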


The following definition and result connect the arrangement with vector bundle theory.

\begin{define}
We say that $(\A,m)$ is \textbf{locally free} if $(\A_X,m_X)$ is free for all 
$
\{0\} \neq X \in L(\A).
$
\label{localfreeness}
\end{define}

\begin{prop}
Let $E:=\widetilde{D(\A,m)}$. Then 
$E$ is locally free if and only if $(\A,m)$ is locally free.
\label{locallyfree}
\end{prop}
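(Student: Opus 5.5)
The plan is to check local freeness of the sheaf $E=\widetilde{D(\A,m)}$ on $\P^{\ell-1}$ stalk by stalk. Local freeness of a coherent sheaf is a pointwise condition. So it suffices to show: for every point $x\in\P^{\ell-1}$, corresponding to a line $L\subset V$, the stalk $E_x$ is a free $\O_{\P^{\ell-1},x}$-module if and only if $(\A_X,m_X)$ is free, where $X\in L(\A)$ is the smallest flat containing $L$, i.e.\ $X=\bigcap_{H\supset L}H$ (with $X=V$ if no hyperplane contains $L$). We also need that every nonzero $X\in L(\A)$ arises this way; indeed any line $L\subset X$ which is not contained in a smaller flat will do.

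The first key step is a localization identity for the stalk. Let $\mathfrak p$ be the homogeneous prime of $L$, and let $f$ be a linear form not vanishing on $L$. Then $E_x$ is the degree-zero part of $D(\A,m)_{\mathfrak p}$, after inverting $f$. For $H\not\supset L$ the form $\alpha_H$ is a unit in $S_{\mathfrak p}$, so the condition imposed by $H$ becomes vacuous. This gives
$$
D(\A,m)_{\mathfrak p}=D(\A_X,m_X)_{\mathfrak p}.
$$
This is the standard argument that logarithmic modules localize to the localization arrangement, and it uses only Definition \ref{logdermulti}.

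The second step passes between graded freeness and freeness at $\mathfrak p$. The arrangement $\A_X$ is non-essential: after a linear change of coordinates its defining forms involve only the coordinates of $V/X$. Hence $D(\A_X,m_X)=D'\otimes_{S'}S$, where $S'$ is the polynomial ring in those variables and $D'$ is the module for the essentialization. Moreover $D(\A_X,m_X)$ is the direct sum of this module with the derivations $S\partial$ along $X$. Since $\mathfrak p\supset \mathfrak p':=\mathfrak p\cap S'$ and $D'$ is graded, we get: $D'$ is free $\iff$ $D'_{\mathfrak p'}$ is free (graded Nakayama at the irrelevant ideal, because $\mathfrak p'$ is the irrelevant ideal of $S'$, all $\alpha_H$ with $H\in\A_X$ vanishing on $L$) $\iff$ $D(\A_X,m_X)_{\mathfrak p}$ is free (faithfully flat base change $S'_{\mathfrak p'}\to S_{\mathfrak p}$).

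Finally, $E_x$ is free over $\O_x$ if and only if $D(\A,m)_{\mathfrak p}$ is free. This holds because $S_{(f)}$-degree-zero localization and homogeneous localization differ by adjoining the unit $f$ as a Laurent variable.

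The main obstacle is this last dehomogenization bookkeeping, together with the graded Nakayama step. I would handle both by noting that, for finitely generated modules over a local ring, freeness is equivalent to flatness, and that flatness descends along the faithfully flat extension $\O_x\to \O_x[f,f^{-1}]\cong (S_{\mathfrak p})_{\text{homog. loc.}}$. Ranging over all $x$ then gives both directions of the equivalence.
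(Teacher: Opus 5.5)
Your proposal is correct and is essentially the argument the paper relies on: the paper only cites Musta\c{t}\u{a}--Schenck (Theorem 3.3 of \cite{MS}), whose proof is exactly this stalkwise comparison, and the same steps carry over to multiplicities. Your localization identity is the $p=1$ case of Proposition \ref{localfunctor}, and you pass from freeness at $\mathfrak p$ to graded freeness via the essentialization of $\A_X$; the only small points to tidy are that checking closed points suffices (the non-free locus of a coherent sheaf is closed) and that closed points correspond to lines only because $\K$ is taken algebraically closed (otherwise run the same argument with an arbitrary homogeneous prime $\mathfrak p$ and $X=\bigcap_{\alpha_H\in\mathfrak p}H$).
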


\noindent
\textbf{Proof}.
When $m\equiv 1$, then it was proved in Theorem 3.3 in \cite{MS}, and the same proof works well for multiarrangements. \owari
\medskip

Below we gather several results which will be used in the proof of the main result.

\begin{prop}[Proposition 1.6, \cite{ATW}]
For a multiarrangement $(\A,m)$ and $X \in L(\A)$, it holds that 
$$D^p(\A,m)_x = D^p(\A_X,m_X)_x$$
for a prime ideal $x$ with 
$$
x \in X \setminus \cup_{H \in \A \setminus \A_X} H.
$$
\label{localfunctor}
\end{prop}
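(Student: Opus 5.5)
The statement to prove is Proposition \ref{localfunctor}: for a multiarrangement $(\A,m)$, a flat $X \in L(\A)$ and a prime $x$ lying on $X$ but on no hyperplane of $\A\setminus\A_X$, we have $D^p(\A,m)_x = D^p(\A_X,m_X)_x$. I will show that both sides are the same submodule of $(\wedge^p \Der S)_x$, which is free over $S_x$ on the basis $\partial_{x_{i_1}}\wedge\cdots\wedge\partial_{x_{i_p}}$. Localization is exact and commutes with finite intersections of submodules. So it suffices to describe $D^p(\A,m)$ as a finite intersection, over $H\in\A$, of modules $D^p_H$. Here
$$D^p_H:=\{\theta\in\wedge^p\Der S \mid \theta(\alpha_H,f_2,\ldots,f_p)\in S\alpha_H^{m(H)}\ \forall f_i\in S\}.$$
By definition $D^p(\A,m)=\bigcap_{H\in\A}D^p_H$ and $D^p(\A_X,m_X)=\bigcap_{H\in\A_X}D^p_H$.

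First I reduce the condition defining $D^p_H$ to finitely many linear conditions. Since $\theta$ is multi-derivation-like (alternating and a derivation in each slot), $\theta(\alpha_H,f_2,\ldots,f_p)$ is an $S$-linear combination of the values $\theta(\alpha_H,x_{j_2},\ldots,x_{j_p})$. Hence $D^p_H$ is the kernel of the $S$-linear map
$$\wedge^p\Der S\to \bigoplus_{J} S/S\alpha_H^{m(H)},\qquad \theta\mapsto \bigl(\theta(\alpha_H,x_{j_2},\ldots,x_{j_p})\bigr)_J .$$
By exactness, $(D^p_H)_x$ is the kernel of the localized map into $\bigoplus_J S_x/\alpha_H^{m(H)}S_x$. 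Consequently
$$D^p(\A,m)_x=\bigcap_{H\in\A}(D^p_H)_x, \qquad D^p(\A_X,m_X)_x=\bigcap_{H\in\A_X}(D^p_H)_x.$$

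The key step is to show that $(D^p_H)_x=(\wedge^p\Der S)_x$ for every $H\in\A\setminus\A_X$. This is where the hypothesis on $x$ enters. I read ``$x\in X\setminus\cup H$'' as: the prime $x$ contains the defining ideal of $X$ but contains no $\alpha_H$ with $H\notin\A_X$. For such $H$, $\alpha_H\notin x$, so $\alpha_H$ is a unit in $S_x$. Then $S_x/\alpha_H^{m(H)}S_x=0$, the target of the localized map vanishes, and its kernel is everything. Intersecting, the factors indexed by $\A\setminus\A_X$ drop out, which gives the equality.

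The only delicate point is to justify that localization commutes with the (finite) intersection and with the kernel description. Both follow from flatness of $S_x$ over $S$. I must also interpret the hypothesis on $x$ correctly as a prime containing $I(X)$ and avoiding each $\alpha_H$ for $H\notin\A_X$. I expect no real obstacle beyond this bookkeeping.
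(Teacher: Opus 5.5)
Your proof is correct. The reduction to the finitely many values $\theta(\alpha_H,x_{j_2},\ldots,x_{j_p})$ is valid, so each $D^p_H$ is a kernel. Exactness of localization then gives $D^p(\A,m)_x=\bigcap_{H\in\A}(D^p_H)_x$, and the factors with $\alpha_H\notin x$ become all of $(\wedge^p\Der S)_x$.

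The paper gives no proof of its own here; it quotes \cite{ATW}. The usual argument for such localization statements is a direct sandwich. With $Q':=\prod_{H\in\A\setminus\A_X}\alpha_H^{m(H)}$, one checks straight from the definition that
$$
Q'\,D^p(\A_X,m_X)\subseteq D^p(\A,m)\subseteq D^p(\A_X,m_X).
$$
Since $Q'\notin x$ is a unit in $S_x$, localizing collapses the chain.

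That route needs neither the kernel description (hence no reduction to finitely many test functions) nor the compatibility of localization with intersections. It only uses that multiplication by $Q'$ enforces the missing conditions. Yours is equally rigorous and a bit more structural: it shows hyperplane by hyperplane that the condition defining $D^p_H$ is invisible at primes off $H$.

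You could shorten your key step in the same spirit. Since $\alpha_H^{m(H)}\wedge^p\Der S\subseteq D^p_H$, this inclusion already forces $(D^p_H)_x=(\wedge^p\Der S)_x$ once $\alpha_H$ is a unit in $S_x$. Note also that neither argument uses $x\supseteq I(X)$; only $\alpha_H\notin x$ for $H\in\A\setminus\A_X$ matters.
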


\begin{prop}
If $(\A,m)$ is $p$-free, then $(\A_X,m_X)$ is $p$-free for all $X \in L(\A)$.
\label{freelocal}
\end{prop}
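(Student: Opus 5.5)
We must prove Proposition \ref{freelocal}: if $D^p(\A,m)$ is free, then $D^p(\A_X,m_X)$ is free for every $X \in L(\A)$. We follow the standard localization argument for $p=1$, with Proposition \ref{localfunctor} as the key input. The plan is to check freeness by a graded criterion at the ideal $\mathfrak{m}_X$ of $X$. A finitely generated graded $S$-module is free if and only if its localization at the irrelevant ideal is free. Since $D^p(\A_X,m_X)$ only involves forms from $X^\perp$, we can reduce to the "essential part". After a linear change of coordinates, take $x_1,\dots,x_r$ spanning the linear forms vanishing on $X$, where $r=\codim X$, and put $R=\K[x_1,\dots,x_r]$. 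Then all $\alpha_H$ with $H\in\A_X$ lie in $R$.

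The first step is a decomposition of $D^p(\A_X,m_X)$. Write $\Der S = \Der_R \oplus \bigoplus_{j>r} S\partial_{x_j}$. We expect
$$D^p(\A_X,m_X)\simeq \bigoplus_{k=0}^{p} \bigl(D^k_R(\A_X',m_X)\otimes_R S\bigr)\otimes_{\K}\wedge^{p-k}\langle \partial_{x_{r+1}},\dots,\partial_{x_\ell}\rangle,$$
where $\A_X'$ is the essential arrangement in $\K^r$. Alternatively, and more simply, we avoid this decomposition and argue directly as follows. The module $D^p(\A_X,m_X)$ is graded and finitely generated, and it is stable under translation by points of $X$: substituting $x\mapsto x+v$ for $v\in X$ fixes every $\alpha_H$. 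Hence its freeness is equivalent to freeness of its localization at any prime $x$ lying in $X$. This holds because the non-free locus is closed and conic, and it is invariant under translation by $X$. So if it contained the origin, it would contain a generic point of $X$. More precisely, the minimal free resolution, or equivalently the ideals of minors of a presentation, can be taken with entries in $R\otimes\K[x_{r+1},\dots,x_\ell]$ in a way that is invariant under these translations. So the non-free locus $Z$ is a closed conic subset with $Z+X=Z$. If $0\in Z$, then $X\subset Z$, and in particular the generic point of $X$ lies in $Z$.

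The second step is to apply this criterion. Choose the prime $x$ to be the generic point of $X$, i.e. the prime ideal of $X$. It lies in $X$ and not in any $H\in\A\setminus\A_X$, since those $H$ do not contain $X$. By Proposition \ref{localfunctor}, $D^p(\A_X,m_X)_x=D^p(\A,m)_x$. The right-hand side is free, being the localization of a free module. Hence $x\notin Z$. By the invariance above, $0\notin Z$, so the localization at the irrelevant ideal is free. Therefore the graded module $D^p(\A_X,m_X)$ is free.

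The main obstacle is the translation-invariance step. One must justify rigorously that the non-free locus of $D^p(\A_X,m_X)$ is stable under translations by $X$. The first thing to try is the explicit base-change description: $D^p(\A_X,m_X)=M\otimes_R S$, where $M$ is the corresponding module defined over $R$. This follows from the decomposition of $\wedge^p\Der S$ above, together with the observation that the defining conditions only involve the $R$-coefficients. Given this, freeness of $M\otimes_R S$ at the prime of $X$, which contracts to the irrelevant ideal of $R$, is equivalent to freeness of $M$ localized at that maximal graded ideal, by faithful flatness of $R\to S$. In turn this is equivalent to graded freeness of $M$, and hence of $D^p(\A_X,m_X)$.
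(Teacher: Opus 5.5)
Your proposal is correct and follows essentially the paper's route, which is the proof of Orlik--Terao's Theorem 4.37 combined with Proposition \ref{localfunctor}: localize at the prime $I(X)$, identify $D^p(\A_X,m_X)_{I(X)}$ with the free module $D^p(\A,m)_{I(X)}$, and recover graded freeness from the fact that $D^p(\A_X,m_X)$ is extended from the coordinate ring $R$ of $V/X$. You can drop the translation-invariance discussion, since your closing base-change argument (essentially Lemma \ref{decomp}) together with faithful flatness of the localization of $R$ at its irrelevant ideal into $S_{I(X)}$ already gives a complete proof.
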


\noindent
\textbf{Proof}. 
Apply the same proof as the proof of Theorem 4.37 in \cite{OT} combined with Proposition \ref{localfunctor}.\owari
\medskip

\begin{prop}[Lemma 1.3, \cite{ATW}]
If $(\A,m)$ is free, then $$
\wedge^p D(\A,m) =D^p(\A,m)$$
for all $p$.
\label{STmulti}
\end{prop}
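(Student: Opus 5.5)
The last statement is Proposition \ref{STmulti}: if $(\A,m)$ is free then $\wedge^p D(\A,m)=D^p(\A,m)$ for all $p$. I plan to prove the two inclusions separately. The inclusion $\wedge^p D(\A,m)\subset D^p(\A,m)$ is immediate. For $\theta_1,\ldots,\theta_p\in D(\A,m)$ we have
$$(\theta_1\wedge\cdots\wedge\theta_p)(\alpha_H,f_2,\ldots,f_p)=\det(\theta_i(g_j)),$$
where $g_1=\alpha_H$ and $g_j=f_j$ for $j\ge2$. Expanding this determinant along the first column, every term contains a factor $\theta_i(\alpha_H)\in S\alpha_H^{m(H)}$.

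For the reverse inclusion, let $\theta_1,\ldots,\theta_\ell$ be a homogeneous basis of $D(\A,m)$ with degrees $d_i$. I will use Saito's criterion for multiarrangements: $\theta_1\wedge\cdots\wedge\theta_\ell = c\,Q\,\partial_{x_1}\wedge\cdots\wedge\partial_{x_\ell}$ with $c\in\K^\times$ and $Q=\prod_H\alpha_H^{m(H)}$. The elements $\theta_I:=\theta_{i_1}\wedge\cdots\wedge\theta_{i_p}$ for $|I|=p$ form a basis of $\wedge^p\Der S\otimes_S F$, where $F$ is the fraction field. So any $\eta\in D^p(\A,m)$ can be written uniquely as $\eta=\sum_I c_I\theta_I$ with $c_I\in F$, and the goal is to show $c_I\in S$.

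To extract the coefficients, wedge with the complementary basis elements. For $J$ the complement of $I$ we get $\eta\wedge\theta_J=\pm c_I\,c\,Q\,\partial_{x_1}\wedge\cdots\wedge\partial_{x_\ell}$. It therefore suffices to show that the coefficient of $\eta\wedge\theta_J$ is divisible by $\alpha_H^{m(H)}$ for each $H$. Since the $\alpha_H$ are pairwise non-associate, this gives divisibility by $Q$ and hence $c_I\in S$.

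This divisibility is the main step. Fix $H$ and change coordinates so that $\alpha_H=x_1$; the setup is $GL$-equivariant, so this is harmless. The coefficient of $\partial_{x_1}\wedge\cdots\wedge\partial_{x_\ell}$ in $\eta\wedge\theta_J$ is an alternating sum over splittings of $\{1,\ldots,\ell\}$ into a $p$-set $A$ and an $(\ell-p)$-set $B$ of terms $\eta(x_A)\,\theta_J(x_B)$. In each term, exactly one of $A$ and $B$ contains the index $1$. If $1\in A$, then $\eta(x_1,\ldots)\in Sx_1^{m(H)}$ by the definition of $D^p(\A,m)$. If $1\in B$, then $\theta_J(x_1,\ldots)\in Sx_1^{m(H)}$ by the first inclusion applied to $\theta_J\in\wedge^{\ell-p}D(\A,m)$. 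So every term is divisible by $x_1^{m(H)}$, and the argument is complete.

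Alternatively, one could localize at the generic point of $H$. There $(\A,m)$ becomes a single hyperplane with basis $x_1^{m}\partial_{x_1},\partial_{x_2},\ldots$, where the equality is clear, and then conclude using the reflexivity of $\wedge^p$ of a free module. The direct determinant argument above avoids this.
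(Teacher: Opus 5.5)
Your proof is correct and complete. The easy inclusion, the extraction of $c_I$ from $\eta\wedge\theta_J$ using Saito's criterion (with $Q=\prod_H\alpha_H^{m(H)}$ and $c\in\K^\times$, which is exactly what is needed), and the shuffle expansion showing that the top coefficient of $D^p(\A,m)\wedge D^{\ell-p}(\A,m)$ is divisible by each $\alpha_H^{m(H)}$ all go through, including the boundary cases $p=0,\ell$. The paper gives no argument of its own here, only the citation to \cite{ATW}, so there is nothing to compare against; your argument, which is the standard one, supplies what is being quoted.
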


\begin{prop}
If $(\A,m)$ is locally free, then 
$$
\wedge^p \widetilde{D(\A,m)} \simeq \widetilde{D^p(\A,m)}
$$
for $ 0 \le p \le \ell$.
\label{localfreesheafisom}
\end{prop}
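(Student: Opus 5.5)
The claim is local on $\P^{\ell-1}$, so I would compare the two sheaves stalkwise, using Propositions \ref{localfunctor} and \ref{STmulti}.

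First I construct a global morphism. The wedge product of logarithmic derivations is again logarithmic: for $\theta_1,\ldots,\theta_p \in D(\A,m)$, expanding $(\theta_1\wedge\cdots\wedge\theta_p)(\alpha_H,f_2,\ldots,f_p)$ along the first slot writes it as a combination of the $\theta_i(\alpha_H)\in S\alpha_H^{m(H)}$. This gives a natural $S$-linear map
$$\wedge^p D(\A,m)\to D^p(\A,m),$$
and hence a morphism of sheaves $\varphi:\wedge^p \widetilde{D(\A,m)} \to \widetilde{D^p(\A,m)}$ on $\P^{\ell-1}$. It suffices to show that $\varphi$ is an isomorphism on stalks at every homogeneous prime $x\neq \mathfrak m$. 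Since sheafification commutes with $\wedge^p$, the stalk of the source is $\wedge^p (D(\A,m)_x)$.

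Now fix such a prime $x$ and let $X\in L(\A)$ be the smallest flat containing the point $x$. Because $x$ is not the irrelevant ideal, $X\neq\{0\}$. By construction, $x$ lies in $X$ and in no hyperplane of $\A\setminus\A_X$. Proposition \ref{localfunctor} then gives
$$D^q(\A,m)_x=D^q(\A_X,m_X)_x \quad \text{for all } q,$$
in particular for $q=1$ and $q=p$. By local freeness, $(\A_X,m_X)$ is free, so Proposition \ref{STmulti} gives $\wedge^p D(\A_X,m_X)=D^p(\A_X,m_X)$. Localizing at $x$, and using that localization commutes with exterior powers, yields
$$\wedge^p D(\A,m)_x=\wedge^p D(\A_X,m_X)_x=D^p(\A_X,m_X)_x=D^p(\A,m)_x .$$

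The main obstacle is checking that these identifications are compatible with $\varphi$. The equalities in Proposition \ref{localfunctor} are equalities of submodules of $(\wedge^q \Der S)_x$, and the wedge map in both cases is the restriction of the multiplication $\wedge^p(\Der S)_x\to (\wedge^p\Der S)_x$. So the localized $\varphi$ is exactly the equality displayed above, and hence an isomorphism. The cases $p=0$ and $p=\ell$ are trivial or covered by the same argument. Since $\varphi$ is an isomorphism on every stalk, it is an isomorphism of sheaves, which proves Proposition \ref{localfreesheafisom}.
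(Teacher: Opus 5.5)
Your proof is correct and is essentially the argument the paper intends. The paper cites the proof of Proposition 3.9 of \cite{MS} together with Proposition \ref{localfunctor}: form the natural wedge map, localize at each point $x\neq\mathfrak m$, replace $(\A,m)$ by the localization $(\A_X,m_X)$, which is free by local freeness, and apply Proposition \ref{STmulti}. The one step you leave implicit is that $\wedge^p D(\A_X,m_X)_x\to(\wedge^p\Der S)_x$ is injective; this holds because the source is free and the map becomes an isomorphism after inverting $\prod_{H}\alpha_H^{m(H)}$.
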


\noindent
\textbf{Proof}.
Apply the same proof as that of Proposition 3.9 in \cite{MS} combined with Proposition \ref{localfunctor}.\owari
\medskip

\begin{lemma}
Let $(\A,m)$ be a multiarrangement. Then $(\A,m)$ is locally free if and only if 
$(\A_X,m_X)$ is free for $X \in L(\A)$ with $\dim X=1$. 
\label{locfreealmost}
\end{lemma}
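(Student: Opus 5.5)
The ``only if'' direction is immediate from Definition \ref{localfreeness}, since each $X \in L(\A)$ with $\dim X=1$ is in particular a nonzero flat. For the ``if'' direction I would show that $(\A_X,m_X)$ is free for every nonzero $X\in L(\A)$, arguing by descending induction on $\dim X$. No induction is needed in two cases. If $\dim X=1$, freeness is the hypothesis. If $\dim X \ge \ell-2$, then $\A_X$ is essentially a multiarrangement in dimension at most $2$, as explained next, so it is free by Proposition \ref{2free}.

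The reduction to the essentialization goes as follows. For $X\in L(\A)$ choose coordinates with $X=\{x_1=\cdots=x_k=0\}$, where $k=\codim X$. Every $\alpha_H$ with $H\in \A_X$ then lies in $\K x_1+\cdots+\K x_k$. Write $\A'_X$ for the corresponding multiarrangement in $\K^k$ with coordinate ring $S'=\K[x_1,\ldots,x_k]$. The standard identity
$$
D(\A_X,m_X)\simeq \bigl(D(\A'_X,m_X)\otimes_{S'} S\bigr)\oplus \bigoplus_{i=k+1}^{\ell} S\partial_{x_i}
$$
holds: a derivation $\theta=\sum_i f_i\partial_{x_i}$ is logarithmic exactly when its components in $\partial_{x_1},\ldots,\partial_{x_k}$ satisfy the conditions, and those conditions only involve $x_1,\ldots,x_k$. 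Splitting by monomials in $x_{k+1},\ldots,x_\ell$ gives the decomposition. Hence $(\A_X,m_X)$ is free if and only if $(\A'_X,m_X)$ is free, which reduces everything to essential multiarrangements of rank $k=\codim X$.

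For general $X$ I would run an induction on $k=\codim X$: assume every essential multiarrangement of rank $k$ whose rank-$(k-1)$ localizations (i.e.\ one-dimensional flats in $\K^k$) are free is free. Since localizing a localization is a localization, $(\A'_X)_Y=\A_Y$ up to essentialization, and the hypothesis passes down. But this is circular: it amounts to the claim that freeness along all one-dimensional flats implies global freeness, which is false (generic arrangements are counterexamples). So the induction must prove something weaker. The lemma only says that local freeness, meaning freeness at all nonzero $X$, follows from freeness at the minimal nonzero flats. The correct induction therefore runs on flats $X \neq 0$ using the sheaf $E=\widetilde{D(\A,m)}$ and Proposition \ref{locallyfree}, rather than on global freeness.

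The plan is then as follows. A point $x\in\P^{\ell-1}$ corresponds to a one-dimensional subspace $L_x$ of $V$. Let $X$ be the smallest flat containing $L_x$. Then $X$ is nonzero, and by Proposition \ref{localfunctor} the stalk of $D(\A,m)$ at $x$ equals the stalk of $D(\A_X,m_X)$. If $\dim X=1$, this stalk is free by hypothesis. If $\dim X\ge 2$, I need the stalk of $D(\A_X,m_X)$ at the generic point of $X$ to be free, not freeness of all of $\A_X$. After the reduction above, $D(\A_X,m_X)$ is extended from $S'$ and tensored with polynomial variables along $X$. Its localization at a prime $x$ lying over the ideal of $X$ is therefore a localization of $D(\A'_X,m_X)\otimes S$ at a prime contracting to the maximal homogeneous ideal of $S'$, possibly enlarged in the $X$-directions. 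I expect this to be the main obstacle: verifying that such a stalk is free precisely when the lower-dimensional data are. I would handle it via the reflexivity of $D(\A,m)$ from \cite{Z}, the fact that reflexive modules are free in codimension $\le 2$, and the local freeness of $\widetilde{D}$ away from the origin, combined with the product structure along $X$. In this way freeness at every point of $\P^{\ell-1}$ is traced back to the stalks at points of one-dimensional flats, and Proposition \ref{locallyfree} then gives local freeness of $(\A,m)$.
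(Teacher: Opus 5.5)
\textbf{There is a genuine gap.} Your proposal leaves the essential case open. For a flat $X$ with $2\le \dim X\le \ell-3$ (such flats can occur once $\ell\ge 5$), you only say the stalk should be handled by reflexivity. But a reflexive module over a regular local ring is guaranteed to be free only in dimension at most $2$, i.e.\ for flats of codimension at most $2$. That is exactly the case you already settled with Proposition \ref{2free}.

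Appealing to ``the local freeness of $\widetilde{D}$ away from the origin'' is circular: by Proposition \ref{locallyfree} that is precisely the local freeness you are trying to prove. Your own computation also shows that, at a point whose smallest flat is $X$, the prime contracts to the irrelevant ideal of $S'$. So freeness of that stalk amounts to freeness of $(\A_X,m_X)$ itself, and passing to sheaves does not reduce the problem.

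\textbf{The missing idea.} You have the order of the lattice reversed. Localizations at one-dimensional flats are the largest localizations, and every other localization is a localization of one of them. Given $\{0\}\ne Y\in L(\A)$, intersect $Y$ successively with hyperplanes of $\A$. Each step lowers the dimension by at most one, and since $\A$ is essential you end at $\{0\}$, so some flat $X\subset Y$ has $\dim X=1$. Then $\A_Y\subset\A_X$, $Y\in L(\A_X)$ and $(\A_X)_Y=\A_Y$ with the restricted multiplicity. Hence $(\A_Y,m_Y)$ is a localization of the free multiarrangement $(\A_X,m_X)$, and it is free by Proposition \ref{freelocal} with $p=1$ (freeness passes to localizations). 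This is the paper's entire proof; no induction, essentialization, or sheaf theory is needed.

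Your worry about circularity came from trying to obtain freeness of $\A_X$ from its own smaller localizations. That direction is indeed false in general (generic arrangements), but it is never required. Even in your stalk language the repair is the same. The generic point of the projectivization of $X$ in $\P^{\ell-1}$ is a generization of the point given by a one-dimensional flat $X'\subset X$. So its stalk is a localization of a free stalk, hence free, and your contraction argument then gives freeness of $(\A_X,m_X)$.
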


\noindent
\textbf{Proof}.
Let $\{0\} \neq Y \in L(\A)$. Then there is $\{0\} \neq X \in L(\A)$ such that 
$X \subset Y$ and $\dim X=1$. It is clear that $\A_X \supset \A_Y$ by definition, and 
$$
(\A_X)_Y=\A_Y
$$
for $Y \in L(\A_X)$.
So Proposition \ref{freelocal} when $p=1$ completes the proof.\owari
\medskip


\begin{lemma}[Lemma 1.4, \cite{ATW}]
Let $X \in L(\A)$ with $\codim X=k$, thus $\A_X \simeq 
\B \times \emptyset_{\ell-k}$ for an essential arrangement $\B$ in $V/X \simeq \K^k$. Then 
$$
D^p(\A_X,m_X) =\bigoplus_{0 \le i \le p }
 D^i(\B,m_X) \otimes_\K D^{p-i}(\emptyset_{\ell-k}).
$$
\label{decomp}
\end{lemma}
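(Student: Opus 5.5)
We will prove Lemma \ref{decomp} by a direct computation in adapted coordinates. Since the statement is about modules defined by divisibility conditions, everything reduces to separating the variables that the forms $\alpha_H$ ($H\in\A_X$) depend on from those they do not.

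\emph{Coordinates and splitting.} Choose coordinates with $X=\{x_1=\cdots=x_k=0\}$. Then every $\alpha_H$ with $H\in\A_X$ is a linear form in $x_1,\ldots,x_k$ only, and $\B$ is the arrangement in $\K^k$ cut out by these forms. Put
$$S'=\K[x_1,\ldots,x_k],\qquad S''=\K[x_{k+1},\ldots,x_\ell],$$
so that $S=S'\otimes_\K S''$. For $I\subset\{1,\ldots,k\}$ and $J\subset\{k+1,\ldots,\ell\}$, the elements $\partial_I\wedge\partial_J$ form an $S$-basis of $\wedge\Der S$. Hence every $\theta\in\wedge^p\Der S$ can be written uniquely as
$$\theta=\sum_{J}\theta_J\wedge\partial_J,\qquad \theta_J\in S\otimes_{S'}\wedge^{p-|J|}\Der S'.$$
On the other side, $D^{q}(\emptyset_{\ell-k})=\wedge^{q}\Der S''=\bigoplus_{|J|=q}S''\partial_J$, because the empty arrangement imposes no condition. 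So the claimed identity reduces to the following assertion:
$$\theta\in D^p(\A_X,m_X)\iff \theta_J\in D^{p-|J|}(\B,m_X)\otimes_\K S''\ \text{ for all }J.$$
Summing over $J$ and grouping by $i=p-|J|$ then yields exactly $\bigoplus_i D^i(\B,m_X)\otimes_\K D^{p-i}(\emptyset_{\ell-k})$.

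\emph{Step 1: the condition splits over $J$.} The defining condition $\theta(\alpha_H,f_2,\ldots,f_p)\in S\alpha_H^{m(H)}$ for all $f_i$ is multilinear and satisfies the Leibniz rule. It is therefore equivalent to requiring it for $f_i$ ranging over the coordinates $x_1,\ldots,x_\ell$. In other words, the contraction $\iota_{d\alpha_H}\theta$ must have all its coefficients, in the basis $\partial_I\wedge\partial_J$, lying in $\alpha_H^{m(H)}S$. Since $\alpha_H$ involves only $x_1,\ldots,x_k$, we get
$$\iota_{d\alpha_H}\theta=\sum_J(\iota_{d\alpha_H}\theta_J)\wedge\partial_J,$$
and the $\partial_J$-components stay separate. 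Thus $\theta$ is logarithmic if and only if each $\theta_J$, viewed as a $(p-|J|)$-form in the $\partial_{x_1},\ldots,\partial_{x_k}$ with coefficients in $S$, satisfies the divisibility condition for every $H\in\A_X$.

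\emph{Step 2: removing the $S''$-coefficients.} This is the step I expect to need the most care. Expand each coefficient of $\theta_J$ in a $\K$-basis of monomials $u$ of $S''$:
$$\theta_J=\sum_u \theta_{J,u}\,u,\qquad \theta_{J,u}\in\wedge\Der S'.$$
Because $S=S'\otimes_\K S''$ is free over $S'$ with this monomial basis, and $\alpha_H^{m(H)}\in S'$, we have
$$\alpha_H^{m(H)}S=\alpha_H^{m(H)}S'\otimes_\K S''.$$
Contraction with $d\alpha_H$ is $S''$-linear. Hence divisibility of the coefficients of $\iota_{d\alpha_H}\theta_J$ by $\alpha_H^{m(H)}$ holds if and only if it holds for each $\iota_{d\alpha_H}\theta_{J,u}$ in $S'$. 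That is exactly the statement $\theta_{J,u}\in D^{p-|J|}(\B,m_X)$ for every $u$, i.e. $\theta_J\in D^{p-|J|}(\B,m_X)\otimes_\K S''$.

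\emph{Conclusion.} The reverse inclusion is the same computation read backwards. Combining Steps 1 and 2 proves the assertion above and hence the lemma. The only real point to be careful about is the reduction to coordinate inputs $f_i$ and the flatness, in fact freeness, of $S$ over $S'$ used in Step 2. The rest is bookkeeping of multi-indices.
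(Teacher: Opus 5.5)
Your proof is correct and complete. The paper does not prove this lemma; it quotes it from \cite{ATW}, so there is no in-paper argument to compare against. What you give is the standard direct verification, the same mechanism as the classical product formula for $D(\A_1\times\A_2)$, and it has two steps. First, since $\alpha_H\in S'$, the contraction $\iota_{d\alpha_H}$ kills every $\partial_J$ with $J\subset\{k+1,\ldots,\ell\}$, so the logarithmic condition decouples along the $\partial_J$-components. Second, freeness of $S$ over $S'$ on the monomial basis of $S''$, together with $\alpha_H^{m(H)}\in S'$, lets you check divisibility coefficient by coefficient. One point you use implicitly is worth stating: the right-hand side sits inside $\wedge^p\Der S$ via $\theta'\otimes\theta''\mapsto\theta'\wedge\theta''$. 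This map is injective, and its images for different $i$ form a direct sum, because the $\partial_I\wedge\partial_J$ form a basis. This is an omission of exposition, not a gap.
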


\begin{lemma}[Lemma 2.3, \cite{AD}]
It holds that 
$$
D^p(\A,m)[|m|]\simeq D^{\ell-p}(\A,m)^*
$$
for $0 \le p \le \ell$.
\label{iden}
\end{lemma}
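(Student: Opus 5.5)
The plan is to construct an explicit pairing coming from the wedge product and then show it is perfect. Let $Q:=\prod_{H\in\A}\alpha_H^{m(H)}$, of degree $|m|$, and identify $\wedge^\ell\Der S=S\,\partial_{x_1}\wedge\cdots\wedge\partial_{x_\ell}$. For $\theta\in D^p(\A,m)$ and $\varphi\in D^{\ell-p}(\A,m)$ I first show that
$$
\theta\wedge\varphi\in Q\,S\,\partial_{x_1}\wedge\cdots\wedge\partial_{x_\ell}.
$$
Fix $H$ and choose linear coordinates with $\alpha_H=x_1$. Writing $\theta=\sum_I\theta_I\partial_I$, the condition $\theta(\alpha_H,f_2,\ldots,f_p)\in S\alpha_H^{m(H)}$ for all $f_i$ is equivalent to $x_1^{m(H)}\mid\theta_I$ whenever $1\in I$. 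The same holds for $\varphi$. In the coefficient of $\theta\wedge\varphi$, every contributing product $\theta_I\varphi_J$ has $I\sqcup J=\{1,\ldots,\ell\}$, so $1$ lies in exactly one of $I$ and $J$. Hence $x_1^{m(H)}$ divides each such term. Since the $\alpha_H$ are pairwise coprime, $Q$ divides the coefficient.

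Dividing by $Q$ then gives an $S$-bilinear pairing $D^p(\A,m)\times D^{\ell-p}(\A,m)\to S$ of degree $-|m|$. Its adjoint is a graded homomorphism $\Phi:D^p(\A,m)[|m|]\to D^{\ell-p}(\A,m)^*$. The map $\Phi$ is injective, because after tensoring with the fraction field both modules become $\wedge^p$ and $\wedge^{\ell-p}$ of $\Der S\otimes\mathrm{Frac}(S)$, and the wedge pairing there is perfect.

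For surjectivity I use reflexivity. The target is a dual module, hence reflexive. The source is also reflexive: $D^p(\A,m)$ is the intersection, inside $\wedge^p\Der S\otimes\mathrm{Frac}(S)$, of the free module $\wedge^p\Der S$ with the modules $D^p(\{H\},m(H))$. Each $D^p(\{H\},m(H))$ is free, with basis $x_1^{m(H)}\partial_I$ for $1\in I$ and $\partial_I$ for $1\notin I$ in adapted coordinates. An intersection of reflexive submodules of a common vector space is reflexive. An injective map between reflexive modules over the normal domain $S$ is an isomorphism as soon as it is one after localizing at every prime of height at most one. So it suffices to check primes $x$ of height one.

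If $x$ contains no $\alpha_H$, then $Q$ is a unit in $S_x$, both localized modules are $\wedge^\bullet\Der S_x$, and the pairing is perfect. If $x=(\alpha_H)$, then localizing leaves only the condition from $H$. With $\alpha_H=x_1$ the bases above pair as follows: $x_1^{m(H)}\partial_I$ (with $1\in I$) against $\partial_J$ (with $1\notin J$) for $J$ the complement of $I$, and symmetrically. In either case $\partial_I\wedge\partial_J$ is multiplied by $x_1^{m(H)}$, which equals $Q$ up to a unit of $S_x$. So after dividing by $Q$ the pairing matrix is a signed permutation matrix, and the pairing is perfect.

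The main point to get right is the surjectivity step, specifically the reflexivity of $D^p(\A,m)$ together with the precise height-one local description. Once the local bases are written down, the rest is routine.
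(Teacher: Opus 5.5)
Your proof is correct and complete. The paper gives no argument of its own for this lemma; it simply imports it from \cite{AD}, so there is no internal proof to compare against.

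What you give is a self-contained direct proof. You use the $Q$-divided wedge pairing, get injectivity at the generic point, and get surjectivity from reflexivity of both sides together with an explicit check at height-one primes. At those primes everything reduces to the one-hyperplane multiarrangement $(\{H\},m(H))$ and its explicit free basis.

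The more common route in the literature goes through logarithmic forms instead. Contraction $\theta\mapsto \iota_\theta(dx_1\wedge\cdots\wedge dx_\ell)/Q$ identifies $D^p(\A,m)$, up to a shift by $|m|$, with the module $\Omega^{\ell-p}(\A,m)$ of multi-logarithmic forms in Ziegler's sense. One then invokes the Saito--Ziegler type duality $\Omega^{q}(\A,m)\simeq D^{q}(\A,m)^*$. That route is shorter once the theory of forms is available, but the duality it quotes rests on essentially the same height-one computation along each $H$ and the same reflexivity input. Your version avoids logarithmic forms entirely and needs only the definition of $D^p(\A,m)$ and standard facts about reflexive modules over a normal domain.

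A useful by-product is your proof that every $D^p(\A,m)$ is reflexive, as a finite intersection of the free modules $D^p(\{H\},m(H))$. The paper only invokes reflexivity for $p=1$, citing \cite{Z}.
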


Now recall a definition of splitting types and uniform bundles.

\begin{define}
Let $E$ be a vector bundle of rank $r$ on $\P^n_\K$. The 
\textbf{splitting type} $a_E(L)$ of $E$ onto a projective line $L \subset \P^n_\K$ consists of the integers $a_1,\ldots,a_r$ such that 
$$
E|_L \simeq \oplus_{i=1}^r \O_L(a_i).
$$
E is called \textbf{uniform} if $a_E(L)$ is independent of $L$.
\label{uniformdef}
\end{define}

Uniform bundles are intensively studied, and when the rank is at most the dimension of the ambient space, there are several classifications. We use the following result due to Ballico.

\begin{theorem}[Theorem 1, \cite{B}]
Let $E$ be a vector bundle of rank $n+1$ on $\P^n_\K$ for an algebraically closed field $\K$ of characteristic zero. If $n \ge 3$ and $E$ is uniform, then $E$ is a direct sum of line bundles, or line bundle twists of 
$\Omega^1_{\P^n_\K} \oplus \O(a)$ or $
T_{\P^n_\K} \oplus \O(b)$ for some $a, b \in \Z$.
\label{Ballico}
\end{theorem}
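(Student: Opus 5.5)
Since this is Ballico's classification, quoted from \cite{B}, I would reconstruct the standard argument for uniform bundles of rank at most $n$ (Van de Ven, Sato, Elencwajg--Hirschowitz--Schneider) and push it one rank further.

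The setup is the incidence variety $F=\{(x,L)\mid x\in L\}\subset \P^n\times G$, where $G$ is the Grassmannian of lines in $\P^n$, with projections $p:F\to\P^n$ and $q:F\to G$. Since $E$ is uniform of type $a_1\ge\dots\ge a_{n+1}$, the relative Harder--Narasimhan filtration of $p^*E$ along the fibres of $q$ is a filtration by subbundles. Its graded pieces restrict to each line as $\O_L(a_i)^{\oplus k_i}$.

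\textbf{Step 1: gaps of size at least $2$.} Suppose two consecutive distinct values satisfy $a_i-a_{i+1}\ge 2$. The relative tangent bundle of $p$ restricts to the fibres of $q$ as $\O(-1)$-type twists. A standard computation then shows that the corresponding piece of the filtration is trivial on the fibres of $p$, which are copies of $\P^{n-1}$. Hence it descends to a subbundle $E'\subset E$, and $E$ becomes an extension $0\to E'\to E\to E''\to 0$ of uniform bundles of smaller rank.

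By induction and the rank $\le n$ classification, $E'$ and $E''$ are sums of line bundles, or twists of $T_{\P^n}$ or $\Omega^1_{\P^n}$ (possibly plus line bundles). I would then check that the extension splits because the relevant $\mathrm{Ext}^1$ group vanishes. This uses $H^1(\O(a))=0$ for $n\ge 2$, and the fact that $H^1(\Omega^1(a))$ and $H^1(T(a))$ vanish except in degree $0$. The gap condition $a_i-a_{i+1}\ge2$ is exactly what excludes the nonvanishing degree, and it also excludes the Euler-sequence extensions.

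\textbf{Step 2: no gaps of size at least $2$.} After twisting, the type is $(0^k,(-1)^{n+1-k})$; if all $a_i$ are equal, $E$ is trivial on every line and hence trivial. For each $x\in\P^n$ the lines through $x$ form a $\P^{n-1}$. The degree-$0$ part of the filtration gives a morphism $\varphi_x:\P^{n-1}\to \mathrm{Gr}(k,E_x)=\mathrm{Gr}(k,n+1)$.

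If $\varphi_x$ is constant for every $x$, the subspaces glue to a subbundle of $E$. We are then back to an extension as in Step 1, which splits, so $E$ is a sum of line bundles. If $\varphi_x$ is nonconstant, I would classify it by pulling back the universal sub- and quotient bundles, which have ranks $k$ and $n+1-k$, to $\P^{n-1}$. Since $n\ge3$, the target rank is small relative to $\dim\P^{n-1}$. A Chern class and Schwarzenberger-type argument on the pullback of the universal quotient should force it to be a twist of $T_{\P^{n-1}}$ or $\Omega_{\P^{n-1}}$, or its dual, plus trivial summands. This forces $k\in\{1,n\}$ and $\varphi_x$ to be, up to a linear embedding, the tautological map $\P^{n-1}\to \P(E_x)$ or its dual.

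\textbf{Step 3: identify $E$.} In the nonconstant case, the resulting global structure identifies $E$ with $T_{\P^n}(c)\oplus\O(b)$ or $\Omega^1_{\P^n}(c)\oplus\O(a)$. The extra trivial summand appears because rank $n+1$ exceeds $\operatorname{rank}T=n$ by one. The splitting of this summand again follows from the Ext vanishing in Step 1.

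The main obstacle is the classification of morphisms $\P^{n-1}\to\mathrm{Gr}(k,n+1)$ in Step 2. This is precisely where rank $n+1$ is harder than rank $\le n$: the target is one dimension bigger, so new nonconstant maps, and hence the extra summand, can occur. Here I would first attempt the Chern class inequalities from the exact sequence $0\to \mathcal S\to\O^{n+1}\to\mathcal Q\to0$ pulled back along $\varphi_x$.
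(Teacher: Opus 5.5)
The paper does not prove this statement; it is quoted from Ballico, so your sketch has to stand on its own. Step~1 and the constant case of Step~2 are the standard descent-plus-$\mathrm{Ext}$ argument and are fine. One small correction: for $n\ge 3$ one has $H^1(T_{\P^n}(a))=0$ for every $a$. Among the groups $H^1(\Omega^1_{\P^n}(a))$ only $a=0$ is nonzero, and the gap condition forces $a\neq 0$ in the relevant $\mathrm{Ext}^1$. The genuine gap is the nonconstant case of Step~2, which is the whole content of the theorem.

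\textbf{The predicted outcome of Step~2 is false.} Take $E=T_{\P^n}(-2)\oplus\O$. Its splitting type is $(0,0,(-1)^{n-1})$, so there is no gap of size $\ge 2$ and $k=2$. Yet $\varphi_x(L)=T_xL\oplus\K\subset T_x\P^n\oplus\K$ is a nonconstant map to $\mathrm{Gr}(2,n+1)$. Dually, $\Omega^1_{\P^n}(1)\oplus\O(-1)$ gives $k=n-1$. So nonconstant $\varphi_x$ occur for $k\in\{1,2,n-1,n\}$, and an argument ending in ``$k\in\{1,n\}$'' would prove that $T_{\P^n}(-2)\oplus\O$ splits.

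\textbf{The proposed method cannot classify $\varphi_x$.} It only sees a single morphism $\P^{n-1}\to\mathrm{Gr}(k,n+1)$, and such morphisms are not rigid. Compose a hyperplane embedding $\P^{n-1}\subset\P^n=\mathrm{Gr}(1,n+1)$ with the $d$-th power map $g$ of $\P^{n-1}$. This gives nonconstant morphisms of every degree, with $\varphi^*\mathcal{Q}\simeq g^*T_{\P^{n-1}}(-1)\oplus\O$. For $n=3$, $d=2$ the first summand has $c_1=2h$ and $c_2=4h^2$, so it is neither split nor a twist of $T_{\P^2}$ or $\Omega_{\P^2}$. The Chern class argument that makes $\varphi_x$ constant in rank $\le n-1$ works because $c(\mathcal{S})c(\mathcal{Q})=1$ then holds in $\Z[h]$, the total degree being $<n$. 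In rank $n+1$ it holds only modulo $h^n$, with total degree $n+1$, which leaves room for all of these maps.

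What is missing is input tying the $\varphi_x$ to the global standard construction. The degree-$0$ piece of $p^*E$ is $q^*G'$ for a rank-$k$ bundle $G'$ on the Grassmannian of lines, and $p^*E$ is an extension of $q^*G''\otimes p^*\O(-1)$ by $q^*G'$. The constraints this imposes, on Chern classes in $H^*(F)$ and on how $\varphi_x$ varies with $x$ (e.g.\ along planes through $x$), are what must pin down the degree and linearity of $\varphi_x$. Your sketch replaces exactly this step by ``should force''. Step~3, producing an actual isomorphism with $T(c)\oplus\O(b)$ or $\Omega^1(c)\oplus\O(a)$ from knowledge of the $\varphi_x$, is likewise only asserted.
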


\section{Proof of Theorem \ref{main}}

We prove the following as a generalization of Theorem \ref{main}.
Since $\otimes_\K \overline{\K}$ is a faithfully flat functor, the freeness is independent of the field extension. So we may assume that $\K$ is algebraically closed in the rest of this article. 

\begin{theorem}
Let $p$ be an integer with $1 \le p \le \ell-1$ and $(\A,m)$ be a multiarrangement. Then $(\A,m)$ is free if and only if it is $p$-free. 
\label{main2}
\end{theorem}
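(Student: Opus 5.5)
We argue by induction on $\ell$. The direction ``free $\Rightarrow$ $p$-free'' is Proposition \ref{STmulti}, so only the converse needs work. The cases $p=1$ and $p=\ell-1$ are immediate: the first is the definition, and the second follows from Lemma \ref{iden}, since $D^{\ell-1}(\A,m)[|m|]\simeq D(\A,m)^*$ and $D(\A,m)$ is reflexive, so $D(\A,m)\simeq D^{\ell-1}(\A,m)^*[|m|]$ up to shift is free. Hence we may assume $2\le p\le \ell-2$, so $\ell\ge 4$. The base cases $\ell\le 3$ are covered by Proposition \ref{2free} together with the case $p\in\{1,\ell-1\}$.

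\emph{Step 1 (local freeness).} Let $(\A,m)$ be $p$-free and let $X\in L(\A)$ with $\dim X=1$, so $\codim X=\ell-1$. By Proposition \ref{freelocal}, $(\A_X,m_X)$ is $p$-free. By Lemma \ref{decomp}, $\A_X\simeq \B\times\emptyset_1$ with $\B$ essential in $\K^{\ell-1}$, and
$$D^p(\A_X,m_X)=D^p(\B,m_X)\otimes_\K S\;\oplus\; D^{p-1}(\B,m_X)\otimes_\K D^1(\emptyset_1).$$
A direct summand of a free graded module is projective, hence free, so $D^p(\B)$ (extended to $S$) is free. If $p\le \ell-2$, the induction hypothesis applied in dimension $\ell-1$ shows that $\B$ is free. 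If instead $p=\ell-1$ for $\B$, we use the other summand, $D^{p-1}(\B)$, together with $1\le p-1\le \ell-2$. In either case $(\A_X,m_X)$ is free. Lemma \ref{locfreealmost} then shows that $(\A,m)$ is locally free, and by Proposition \ref{locallyfree} the sheaf $E:=\widetilde{D(\A,m)}$ is a vector bundle of rank $\ell$ on $\P^{n}$, $n=\ell-1\ge 3$.

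\emph{Step 2 (uniformity).} By Proposition \ref{localfreesheafisom}, $\wedge^p E\simeq \widetilde{D^p(\A,m)}$, which splits as a sum of line bundles because $D^p$ is free. For any line $L$ with $E|_L\simeq\bigoplus\O_L(a_i)$, the restriction $\wedge^pE|_L$ has splitting type consisting of all sums $a_{i_1}+\cdots+a_{i_p}$ over $i_1<\cdots<i_p$, and this multiset is independent of $L$. We need that the multiset of $p$-fold sums of $\ell$ integers determines the integers themselves, for $1\le p\le \ell-1$. To see this, note that the generating function $\prod(1+tz^{a_i})$ is determined by its coefficients: the top sum fixes $\sum a_i$, and by complementation the $p$-sums determine the $(\ell-p)$-sums. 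A cleaner argument is to recover the power sums $\sum a_i^k$ from the power sums of the $p$-subset sums by induction on $k$, using $\binom{\ell-1}{p-1}\neq 0$ and characteristic zero arguments over $\mathbb{Q}$. I expect this combinatorial recovery lemma to be the main technical obstacle. Granting it, $E$ is uniform.

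\emph{Step 3 (Ballico).} By Theorem \ref{Ballico}, $E$ is either a sum of line bundles, or a twist of $\Omega^1\oplus\O(a)$ or of $T\oplus\O(b)$. In the first case, Proposition \ref{Eis} applied to the reflexive module $D(\A,m)$ gives $D(\A,m)=\bigoplus_d H^0(E(d))$, which is free, and we are done. To exclude the other cases, compute $\wedge^p E$ for, say, $E=\Omega^1(c)\oplus\O(a)$. It contains the summand $\wedge^p\Omega^1(pc)=\Omega^p(pc)$, and for $1\le p\le n-1$ this is not a sum of line bundles, since $H^p(\Omega^p(-?))$-type intermediate cohomology is nonzero; concretely $H^p(\P^n,\Omega^p)\neq0$ with $0<p<n$. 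This contradicts the splitting of $\wedge^pE$ via Horrocks' criterion, because direct summands of split bundles have no intermediate cohomology. The case $T$ is dual. Hence $E$ splits, and $(\A,m)$ is free.
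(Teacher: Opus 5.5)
Your Steps 1 and 3 agree with the paper; Step 3 even supplies the justification that the paper leaves implicit, namely that $H^p(\P^n,\Omega^p)\neq 0$ for $0<p<n$. However, Step 2 rests on the combinatorial lemma you flagged as the main obstacle, and that lemma is false. The multiset of $p$-subset sums of $\ell$ integers does not in general determine the integers, even with their total fixed. Take $\ell=4$, $p=2$, which is the very first case your induction must handle: the multisets $\{1,4,6,7\}$ and $\{2,3,5,8\}$ have the same pairwise sums $\{5,7,8,10,11,13\}$. This is the Selfridge--Straus phenomenon; for $p=2$ uniqueness fails exactly when $\ell$ is a power of $2$.

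Your power-sum induction breaks because the coefficient of $\sum_i a_i^k$ in $\sum_{|I|=p}(\sum_{i\in I}a_i)^k$ is not just $\binom{\ell-1}{p-1}$. Products with the zeroth power sum $\sum_i a_i^0=\ell$ also contribute. For $p=2$,
$$\sum_{i<j}(a_i+a_j)^k=\frac12\Bigl[\sum_{r=0}^{k}\binom{k}{r}\Bigl(\sum_i a_i^r\Bigr)\Bigl(\sum_j a_j^{k-r}\Bigr)-2^k\sum_i a_i^k\Bigr],$$
so the coefficient of $\sum_i a_i^k$ is $\ell-2^{k-1}$, which vanishes for $\ell=4$, $k=3$. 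Indeed, the two multisets above share $\sum a_i$ and $\sum a_i^2$ but not $\sum a_i^3$. So knowing that $\wedge^pE|_L$ has constant splitting type does not, by combinatorics alone, make $E$ uniform.

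The missing ingredient is geometric. The paper extracts only $s_L=\sum a_i$ and $t_L=\sum a_i^2$ from $\wedge^pE$. This is safe because the coefficient of $t_L$ in $\sum_I(\sum_{i\in I}a_i)^2$ is $\binom{\ell-2}{p-1}\neq 0$. It then combines two facts:
\begin{itemize}
\item There is a generic splitting type on a nonempty open set $U$ of lines, and upper semicontinuity of $L\mapsto h^0(E|_L(d))$ gives $h^0(E|_L(d))\ge h^0(E|_{L_0}(d))$ for every line $L$, every $L_0\in U$, and all $d$.
\item The sum $\sum_{-M\le d\le M}h^0(E|_L(d))$ is an explicit function of $s_L$ and $t_L$, hence independent of $L$.
\end{itemize}
Together these force equality for every $d$. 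Second differences in $d$ then recover the splitting type, so $E$ is uniform. In effect, $t_L$ strictly increases on jumping lines.

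You need this argument in place of your recovery lemma. With it, the rest of your proof goes through, modulo one routine point in Step 1: descending freeness of $D^p(\B,m_X)\otimes_\K\K[\alpha]$ over $S$ to freeness of $D^p(\B,m_X)$ over the coordinate ring of $V/X$. The paper handles this by faithful flatness.
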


\noindent
\textbf{Proof}.
The ``only if'' part is nothing but Proposition 
\ref{STmulti}. Let us show the ``if'' part.

Let $(\A,m)$ be $p$-free. We may assume that $2 \le p \le \ell-2$ because of the isomorphism in Lemma \ref{iden}: $$
D^{\ell-1}(\A,m)[|m|] \simeq D(\A,m)^*.
$$
We argue by induction on $\ell \ge 1$. Since we assumed $2 \le p \le \ell-2$, the statement is clear when $\ell \le 3$.

So assume that $\ell \ge 4$. First we show that $(\A,m)$ is locally free.
Let $\{0 \} \neq X \in L(\A)$ with $\codim X=\ell-1$. By Lemma \ref{locfreealmost}, it suffices to show that $(\A_X,m_X)$ is free. Decompose
$$
\A_X=\B \times \emptyset_{1},
$$
where $\B$ is an essential arrangement in $V/X\simeq \K^{\ell-1}$.
By Lemma \ref{decomp}, it holds that 
$$
D^p(\A_X,m_X)=\bigoplus_{i=0}^{p} D^i(\B,m_X) \otimes D^{p-i}(\emptyset_{1})
$$
Since $(\A,m)$ is $p$-free, 
$2 \le p \le \ell-2 $, $\B$ is in $\K^{\ell-1}$ and $D^0(\emptyset_1)=\K[\alpha]$
for some non-zero linear form $\alpha$,
Proposition \ref{freelocal} and the faithful flatness of $\otimes_\K \K[\alpha]$ show that 
$(0) \neq D^p(\B,m_X)$ is free.
Namely, $(\B,m_X)$ is $p$-free.
%
%
So the induction hypothesis shows that $(\B,m_X)$ is free.
Thus Lemma \ref{decomp} and Proposition \ref{STmulti} imply that $(\A_X,m_X)$ is free.
Consequently, $(\A,m)$ is locally free. Thus $E:=\widetilde{D(\A,m)}$ is a vector bundle on 
$\P^{\ell-1} =\mbox{Proj}(S)$ by Proposition \ref{locallyfree}. So 
Proposition \ref{localfreesheafisom} shows that 
$$
\widetilde{\wedge^p D(\A,m) } =
\wedge^p E \simeq \widetilde{D^p(\A,m)}.
$$
Since $(\A,m)$ is $p$-free, we know that $\wedge^p E$ splits into a direct sum of line bundles.

Let $L $ be a projective line in $\P^{\ell-1}=\mbox{Proj}(S)$ and 
let 
$E|_L \simeq \oplus_{i=1}^{\ell} \O_L(a_i).
$
So $a_E(L)=(a_1,\ldots,a_{\ell})$. 
First note that 
$$
s_L:=a_1+\cdots+a_{\ell}=c_1(E)
$$
is constant independent of the choice of the projective line $L$.
We show that in fact 
$$
t_L:=\sum_{i=1}^{\ell} a_i^2
$$
is also independent of $L$. 
Since $\wedge^p E \simeq \widetilde{D^p(\A,m)}$ splits, let us write 
$$
\wedge^p E=\oplus \O(b_i).
$$
Thus 
$$
(\wedge^p E)|_L=\oplus \O_L(b_i) =\wedge^p (E|_L).
$$
and each $b_i$ is $\sum_{i \in I} a_i$, where $I$ is a subset of 
$\{1,\ldots, \ell\}$ with $|I|=p$.
So 
$$
\sum b_i^2=\sum_{I} (\sum_{i \in I} a_i)^2=
\begin{pmatrix}
\ell-1\\p-1
\end{pmatrix}
\sum_{i=1}^\ell a_i^2+
2
\begin{pmatrix}
\ell-2\\p-2
\end{pmatrix}
\sum_{1 \le i<j\le \ell} a_ia_j,
$$
and we can express the right hand side as a linear combination of $s_L^2$ and $t_L$ in such a way that the coefficient of $t_L$ is not zero.
Since the left hand side and $s_L$ are independent of $L$,
 it holds that $t_L$ is also independent of $L$. Let $s=s_L$ and $t=t_L$.

Since $t$ is independent of the choice of projective lines, there is some positive integer $M$ such that, for any projective line $L \subset \P^{\ell-1}$ and the splitting type $a_E(L)=(a_1,\ldots,a_{\ell})$, it holds that 
$$
-M \le a_i \le M
$$
for all $i$.
Note that this $M$ is independent of $L$.
Consider the invariant 
$$
h^0_L:=\sum_{-M \le d \le M }h^0(E|_L(d)).
$$
Since $$
h^0(\O_L(a_i)(d))=\max\{0,a_i+d+1\},
$$
it holds that 
\begin{eqnarray*}
h^0_L&=&\sum_{i=1}^{\ell} \displaystyle \frac{(a_i+M+1)(a_i+M+2)}{2}
=\sum_{i=1}^{\ell}\displaystyle \frac{a_i^2+(2M+3) a_i+(M+1)(M+2)}{2}
\\
&=&\displaystyle \frac{t+s(2M+3)+\ell(M+1)(M+2)}{2}.
\end{eqnarray*}
Namely, $h^0_L$ is independent of $L$ too.

On the other hand, there is a non-empty open set $U$ in the Grassmannian of lines in $\P^{\ell-1}$ such that $a_E(L) $ is constant on  $L \in U$, such a splitting type is called a generic splitting type 
(see Chapter I, Lemma 3.2.2 in \cite{OSS} for example). By the upper semi-continuity of $L \mapsto h^0(E|_L(d))$, we know that for $L \in U$, and $H \not \in U$, it holds that 
$$
h^0(E(d)|_H) \ge h^0(E(d)|_L) 
$$
for all $-M \le d \le M$. Since their sums $h^0_H$ and $h^0_L$ are equal, we know that 
$$
h^0(E(d)|_H) = h^0(E(d)|_L) 
$$
for all $-M \le d \le M$ and all $H,L$. Then we can read off the splitting type of $E$ onto any line $L$. Namely, for a splitting type 
$a_E(L)=(c_1,\ldots,c_\ell)$, it is easy to show that 
$$
h^0(E|_L(d))-h^0(E|_L(d-1))=|\{i\mid 
1 \le i \le \ell,\ c_i \ge -d\}|.
$$
Thus 
\begin{eqnarray*}
&\ &\{h^0(E|_L(d))-h^0(E|_L(d-1))\}-
\{h^0(E|_L(d-1))-h^0(E|_L(d-2))\}\\
&=&|\{i\mid 
1 \le i \le \ell,\ c_i = -d\}|.
\end{eqnarray*}
Since these dimensions are independent of $L$ for each $d$,
it shows that every splitting type is the same as 
the generic splitting type. 

So $E$ is a uniform bundle of rank $\ell$ on $\P^{\ell-1}$. Since $\wedge^p E$ splits by assumption but no $p$-th wedge products for $1 \le p \le \ell-1$ of non-splitting bundles appearing in Theorem \ref{Ballico} split, Theorem \ref{Ballico} shows that $E$ splits into a direct sum of line bundles when $\ell \ge 4$. Now since $D(\A,m)$ is reflexive, Proposition \ref{Eis} shows that 
$$
\bigoplus_{d \in \Z} H^0(\widetilde{D(\A,m)}(d))=D(\A,m),
$$
which completes the proof.
\owari
\medskip

\begin{rem}
The key point above is that a vector bundle $E$ of rank $n+1$ on $\P^n$ splits if $\bigwedge^p E$ splits for some $1\le p\le n$. There may be alternative proofs of this implication using classical vector bundle theory. In fact, by using the same argument as the proof of Theorem \ref{main2} combined with Theorems \ref{Ballico} and \ref{EHS} below, we can show the following.
\end{rem}

\begin{theorem}[\cite{EHS}]
Let $E$ be a rank $r$-locally free sheaf on $\P^n_\CC$ with $r \le n$. If $E$ is uniform, then either $E$ splits, $E \simeq T_{\P^n}(a)$, 
or $E \simeq \Omega_{\P^n}(b)$ for some $a,b \in \Z.$
\label{EHS}
\end{theorem}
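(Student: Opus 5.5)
The plan is to follow the classical ``standard construction'' of Van de Ven and of Elencwajg--Hirschowitz--Schneider, arguing by induction on the rank $r$ of $E$. Let $a_1 \ge \cdots \ge a_r$ be the (constant) splitting type of $E$. Consider the incidence variety $F=\{(x,L)\mid x \in L\} \subset \P^n \times G$, where $G$ is the Grassmannian of lines, with projections $p:F \to \P^n$ and $q:F \to G$. Thus $q$ is a $\P^1$-bundle and $p$ is a $\P^{n-1}$-bundle: its fibre over $x$ is the space of lines through $x$.

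If all $a_i$ are equal, then $p^*E \otimes \O(-a_1)$ is trivial on every fibre of $q$. Hence it is $q^*$ of a bundle on $G$, and restricting to a fibre of $p$ shows it is trivial there as well. It follows that $E(-a_1)$ is trivial, so $E$ splits.

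Otherwise choose $k$ with $a_k > a_{k+1}$. Along each fibre of $q$ the subsheaf of $E|_L$ generated by the summands of degree $\ge a_k$ is canonical. By uniformity these subsheaves glue to a subbundle $F' \subset p^*E$ of rank $k$ with quotient $F''$. The key question is whether $F'$ descends along $p$, i.e.\ whether on each fibre $P_x \simeq \P^{n-1}$ of $p$ the subspace $F'_{(x,L)} \subset E_x$ is independent of $L$. The obstruction is the map
$$
T_{F/\P^n} \longrightarrow \mathcal{H}om(F',F'')
$$
induced by differentiating the flag. Restricted to $P_x$, the source is $T_{\P^{n-1}}(-1)$ and the target is a bundle of rank $k(r-k)$. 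A degree count on lines inside $P_x$, using the splitting types, forces the map to vanish unless the gap is exactly $a_k - a_{k+1}=1$.

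I expect the main obstacle to be the case where this map is nonzero. Since $T_{\P^{n-1}}(-1)$ is stable with $h^0$ giving an $n$-dimensional space, a nonzero map gives an injection on a dense open set. I would first try to show that it must be injective as a bundle map, which requires $n-1 \le k(r-k)$. Since $r \le n$, this forces $k=1$ or $k=r-1$, and then the inequality is tight only when $r=n$. Comparing ranks and Chern classes, one then identifies $F'|_{P_x}$, $F''|_{P_x}$ and the map explicitly. From this one reads off that $E$ is the ``universal'' bundle $T_{\P^n}(a)$ in the case $k=1$, or its dual $\Omega_{\P^n}(b)$ in the case $k=r-1$, for instance by recognizing $E$ via the Euler sequence after pushing forward.

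If instead the obstruction vanishes, then $F'$ and $F''$ descend to uniform bundles $E'$ and $E''$ on $\P^n$ of smaller rank, fitting into an exact sequence $0 \to E' \to E \to E'' \to 0$. Every splitting degree of $E'$ exceeds every splitting degree of $E''$. By induction each of $E'$ and $E''$ is split, $T(a)$, or $\Omega(b)$; since $r\le n$ and $T,\Omega$ have rank $n$, in fact both $E'$ and $E''$ must split. Finally, $\mathrm{Ext}^1(E'',E')$ is a sum of groups $H^1(\P^n,\O(c))$, which vanish for $n \ge 2$. Hence the extension splits and $E$ is a direct sum of line bundles, completing the induction.
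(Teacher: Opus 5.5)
The paper gives no proof of this statement. It is quoted from Elencwajg--Hirschowitz--Schneider and used as a black box. Your framework is the classical one: the standard construction, descent of $F'$ when a gap is at least $2$, then induction and $\mathrm{Ext}^1(E'',E')=0$. That half of your argument is sound, with two corrections:
\begin{itemize}
\item The vanishing of the obstruction for a gap $\ge 2$ comes from restricting to the fibres $q^{-1}(L)$. There $T_{F/\P^n}|_{q^{-1}(L)}\cong\O(-1)^{n-1}$ and $\mathcal{H}om(F',F'')|_{q^{-1}(L)}\cong\bigoplus_{i\le k<j}\O(a_j-a_i)$. It does not come from lines in $P_x$, on which you know nothing about the splitting of $F'$ and $F''$.
\item On $P_x$ the source is $T_{F/\P^n}|_{P_x}=T_{P_x}\cong T_{\P^{n-1}}$, not $T_{\P^{n-1}}(-1)$.
\end{itemize}

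The genuine gap is the non-descending case, which is the whole content of the theorem.
\begin{itemize}
\item Nothing justifies that a nonzero map out of a stable bundle is generically injective, since you know nothing about the target.
\item More seriously, $n-1\le k(r-k)$ does not force $k\in\{1,r-1\}$. Since $k(r-k)=(k-1)(r-k-1)+r-1$, intermediate $k$ satisfy it more easily. For example, $r=n=4$, $k=2$ gives $4\ge 3$.
\end{itemize}

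No dimension count can close this. The classifying map $f_x\colon P_x\cong\P^{n-1}\to\mathrm{Gr}(k,E_x)$ of $F'|_{P_x}\subset E_x\otimes\O$ could a priori be a nonconstant morphism $\P^3\to\mathrm{Gr}(2,4)$. Such morphisms exist: four general sections of $N(1)$, with $N$ a null-correlation bundle, generate it, by Porteous with $c_1^3-2c_1c_2=0$. For $r<n$, Tango's Chern class argument does show every morphism $\P^{n-1}\to\mathrm{Gr}(k,r)$ is constant, so the real difficulty is $r=n$.

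What is missing is a finer input that excludes intermediate $k$ and non-linear $f_x$. For instance, you could show that $f_x$ has degree one, i.e.\ $c_1(F'|_{P_x})=-1$. To do this, use that the graded pieces of $p^*E$ along the $q$-fibres are pulled back from the Grassmannian of lines, up to twists by $p^*\O(1)$, together with $r\le n$, e.g.\ via a Chern class computation on $F$. The bound $r\le n$ is essential here: for $S^2T_{\P^2}$ (type $(4,3,2)$, $r=3>n=2$) the map $f_x$ is a conic.

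Once $f_x$ has degree one, $f_x(P_x)$ is a linear $\P^{n-1}$ inside the Grassmannian. It therefore lies in some $\{W\supset A\}\cong\P^{r-k}$ or some $\{W\subset B\}\cong\P^{k}$. This forces $r=n$, $k\in\{1,n-1\}$, and $f_x$ a linear isomorphism. It is this that yields $\P(E)\cong\P(T_{\P^n})$ (or the dual version) over $\P^n$, and hence $E\cong T_{\P^n}(a)$ or $\Omega_{\P^n}(b)$. Your final identification step, as written, is only gestured at.
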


\begin{theorem}
Let $n \ge 3$, $E$ be a rank $r$-locally free sheaf on $\P^n_\CC$ with $r \le n+1$. If $\wedge^p E$ splits for some $p,\ 1 \le p \le r-1$, then $E$ splits.
\label{main3}
\end{theorem}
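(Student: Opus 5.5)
We follow the second half of the proof of Theorem \ref{main2}: show that $E$ is uniform, then use the classification of uniform bundles to conclude. If $p=1$ there is nothing to prove. If $p=r-1$, then $\wedge^{r-1}E\simeq E^*\otimes\det E$, so $E^*$ splits and hence $E$ splits. We may therefore assume $2\le p\le r-2$, which forces $r\ge 4$. For $r=1$ the statement is vacuous.

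\textbf{Uniformity.} Fix a line $L\subset\P^n$ with splitting type $a_E(L)=(a_1,\ldots,a_r)$. Write $\wedge^pE\simeq\oplus\O(b_j)$. Then $(\wedge^pE)|_L=\wedge^p(E|_L)$, so the multiset $\{b_j\}$ equals $\{\sum_{i\in I}a_i : |I|=p\}$. The sum $s_L=\sum a_i=c_1(E)$ does not depend on $L$. The identity
$$\sum_j b_j^2=\binom{r-1}{p-1}t_L+2\binom{r-2}{p-2}\sum_{i<k}a_ia_k,$$
with $t_L=\sum a_i^2$ and $2\sum_{i<k}a_ia_k=s_L^2-t_L$, shows that $t_L$ is also independent of $L$. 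The coefficient of $t_L$ is $\binom{r-1}{p-1}-\binom{r-2}{p-2}=\binom{r-2}{p-1}$, which is nonzero for $p\le r-1$.

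Since $t$ is fixed, the $a_i$ are bounded by some $M$ uniformly in $L$. Then
$$h^0_L:=\sum_{|d|\le M}h^0(E|_L(d))$$
is an explicit function of $s$, $t$, $r$ and $M$, so it is constant in $L$. Upper semicontinuity of each $h^0(E|_L(d))$ on the Grassmannian of lines, compared against the generic splitting type, forces each term to be constant. Taking second differences in $d$ recovers the splitting type, so $E$ is uniform.

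\textbf{Classification.} If $r\le n$, Theorem \ref{EHS} says $E$ splits or $E\simeq T_{\P^n}(a)$ or $E\simeq\Omega_{\P^n}(b)$. If $r=n+1$, Theorem \ref{Ballico} applies since $n\ge3$, and gives the extra possibilities $\Omega(a)\oplus\O(c)$ and $T(b)\oplus\O(c)$ up to twist. It remains to show that $\wedge^pE$ does not split for $1\le p\le r-1$ in each non-split case. Twisting by line bundles does not affect splitting, so we may ignore twists.

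\textbf{Main obstacle: the non-split cases.} For $E=\Omega$ (rank $n$), the bundle $\wedge^p\Omega=\Omega^p$ with $1\le p\le n-1$ has $H^p(\Omega^p)=\K\neq0$ and $0<p<n$, which a split bundle on $\P^n$ cannot have. Similarly $\wedge^pT\simeq\Omega^{n-p}\otimes\omega^{-1}$ does not split.

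For $E=\Omega\oplus\O$ (rank $n+1$),
$$\wedge^pE=\Omega^p\oplus\Omega^{p-1}.$$
For $1\le p\le n$ this contains $\Omega^p$ or $\Omega^{p-1}$ with index strictly between $0$ and $n$, except when $p=1$ or $p=n$. In those two edge cases the summands are $\Omega$ and $\Omega^{n-1}$, both of which are non-split. Every case thus contains a non-split summand. By Krull--Schmidt, a direct sum containing a non-split indecomposable summand does not split. Equivalently, one can use intermediate cohomology $H^q(\cdot(d))$ with $0<q<n$, which vanishes for every split bundle. The $T\oplus\O$ case follows by duality.

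Hence $E$ splits.
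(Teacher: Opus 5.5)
Your proposal is correct and follows the same route as the paper: you get uniformity from the constancy of $s_L$ and $t_L$ together with the semicontinuity argument for $h^0_L$, then apply Theorem~\ref{EHS} for $r\le n$ and Theorem~\ref{Ballico} for $r=n+1$, and your intermediate-cohomology check supplies the non-splitting of $\wedge^p$ of the exceptional bundles, which the paper only asserts. One small imprecision: in the Ballico case the relative twist cannot be removed, so you actually have $\wedge^p(\Omega\oplus\O(c))=\Omega^p\oplus\Omega^{p-1}(c)$; since your criterion ($H^q\neq0$ after some twist, for some $0<q<n$) applies to each summand separately, the conclusion is unaffected, and there is no real exceptional case at $p=1$ or $p=n$, because one of $p,\,p-1$ always lies in $[1,n-1]$.
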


\noindent
\textbf{Proof.}
Apply the same argument as the proof of Theorem \ref{main2} combined with Theorems \ref{Ballico} and \ref{EHS}.\owari

\end{document}